\providecommand{\U}[1]{\protect \rule{.1in}{.1in}}
\newtheorem{theorem}{Theorem}[section]
\newtheorem{definition}[theorem]{Definition}
\newtheorem{lemma}[theorem]{Lemma}
\newtheorem{proposition}[theorem]{Proposition}
\newtheorem{remark}[theorem]{Remark}
\newenvironment{proof}[1][Proof]{\noindent \textbf{#1.} }{\  $\Box$}
\numberwithin{equation}{section}
\begin{document}

\title{Dynamic programming principle for stochastic optimal control problem under
degenerate $G$-expectation}
\author{Xiaojuan Li\thanks{Zhongtai Securities Institute for Financial Studies,
Shandong University, Jinan 250100, China, lixiaojuan@mail.sdu.edu.cn, Research supported by Natural Science Foundation of Shandong Province (No.
ZR2014AP005). } }
\maketitle

\textbf{Abstract}. In this paper, we study a stochastic optimal control
problem under degenerate $G$-expectation. By using implied partition method,
we show that the approximation result for admissible controls still hold.
Based on this result, we prove that the value function is deterministic, and
obtain the dynamic programming principle. Furthermore, we prove that the value
function is the unique viscosity solution to the related HJB equation under
degenerate case.

{\textbf{Key words}. } $G$-expectation, Dynamic programming principle,
Hamilton-Jacobi-Bellman equation, Stochastic optimal control

\textbf{AMS subject classifications.} 93E20, 60H10, 35K15

\addcontentsline{toc}{section}{\hspace*{1.8em}Abstract}

\section{Introduction}
It is well-known that many economic and financial problems involve volatility uncertainty
(see \cite{EJ-1, EJ-2}), which is characterized by a family of nondominated probability measures.
In this case, this kind of problems cannot be modeled within a probability space framework. So we need a
new framework to deal with it. Motivated by the study of this problem, Peng \cite{P07a, P08a} established the theory of $G$-expectation
$\mathbb{\hat{E}}[\cdot]$. The $G$-Brownian motion $B=(B^{1},\ldots,B^{d})^{T}$ and It\^{o}'s integral with
respect to $B$ were constructed. Moreover, the theory of stochastic
differential equation driven by $G$-Brownian motion ($G$-SDE) has been established.

Stochastic optimal control problems have important applications in economy and finance, such as the utility maximization problems in finance.
The dynamic programming principle (DPP), originated by Bellman in the 1950s, is a powerful tool to solve stochastic optimal control problems.
Under the probability space framework, DPP and the related Hamilton-Jacobi-Bellman (HJB) equation have been intensively
studied by a lot of researchers for various kinds of stochastic optimal control problems (see books \cite{MY, J.Yong} and the references therein).

Under the $G$-expectation framework, Hu and Ji \cite{HJ1} first investigated
the stochastic recursive optimal control problem under non-degenerate $G$, and
obtained the related DPP and HJB equation. For the application of DPP and HJB,
Fouque et al. \cite{FPW} first studied the portfolio selection with ambiguous
correlation and stochastic volatilities (see \cite{HW, Pu} for further
research results). In addition, for the different formulation and method in
studying problems with volatility uncertainty in finance, we refer the readers
to \cite{DM, DK, MPZ, PZ, STZ11} and the references therein.

In this paper, we study the following stochastic control system under
degenerate $G$:%
\begin{equation}
\left \{
\begin{array}
[c]{rl}%
dX_{s}^{t,x,u}= & b(s,X_{s}^{t,x,u},u_{s})ds+h_{ij}(s,X_{s}^{t,x,u}%
,u_{s})d\langle B^{i},B^{j}\rangle_{s}+\sigma(s,X_{s}^{t,x,u},u_{s})dB_{s},\\
X_{t}^{t,x,u}= & x,
\end{array}
\right.  \label{e1-1}%
\end{equation}
where $(t,x)\in \lbrack0,T]\times \mathbb{R}^{n}$, the control domain $U$ is a
given nonempty compact set of $\mathbb{R}^{m}$, and the set of admissible
controls $(u_{s})_{s\in \lbrack t,T]}$ is denoted by $\mathcal{U}%
[t,T]=M_{G}^{2}(t,T;U)$. The value function is defined by
\begin{equation}
V(t,x):=\underset{u\in \mathcal{U}[t,T]}{ess\inf}\mathbb{\hat{E}}_{t}\left[
\Phi(X_{T}^{t,x,u})+\int_{t}^{T}f(s,X_{s}^{t,x,u},u_{s})ds+\int_{t}^{T}%
g_{ij}(s,X_{s}^{t,x,u},u_{s})d\langle B^{i},B^{j}\rangle_{s}\right]  .
\label{e1-2}%
\end{equation}

Under the non-degenerate $G$, Hu, Wang and Zheng \cite{HWZ} proved that
$I_{[c,c^{\prime})}(B_{t}^{i})\in L_{G}^{2}(\Omega_{t})$ for each
$c<c^{\prime}$ and $i\leq d$. Based on this result, Hu and Ji \cite{HJ1}
showed that $\mathcal{U}[t,T]$ contains enough simple and non-trivial
admissible controls. Furthermore, this kind of controls is dense in
$\mathcal{U}[t,T]$ under the norm in $M_{G}^{2}(t,T;\mathbb{R}^{m})$ (see
Lemma 13 in \cite{HJ1}), which is the key point to prove that the value
function $V(\cdot,\cdot)$ satisfies the DPP. But under the degenerate $G$, if
$B^{i}$ is degenerate, then $I_{[c,c^{\prime})}(B_{t}^{i})\not \in L_{G}%
^{2}(\Omega_{t})$ for each $c<c^{\prime}$ and $t>0$ (see Theorem 4.1 in
\cite{LL}), which is completely different from the non-degenerate case.
Therefore, a natural question is whether the above control problem
(\ref{e1-1})-(\ref{e1-2}) under degenerate $G$ is well-posed.

In order to overcome this difficulty, we need to assume that there exists a
non-degenerate $B^{i^{\ast}}$. By using implied partition method which was
proposed in \cite{HJ1} to find optimal control, we obtain that $u\in
\mathcal{U}[t,T]$ can be approximated by a sequence of $u^{k}\in
\mathbb{U}[t,T]$, $k\geq1$, under the norm in $M_{G}^{2}(t,T;\mathbb{R}^{m})$
(see Lemma \ref{le14}). Based on this result, we prove that the value function
$V(\cdot,\cdot)$ is deterministic and satisfies the DPP. Furthermore, we show
that $V(\cdot,\cdot)$ is the unique viscosity solution to the related HJB
equation under degenerate case.

This paper is organized as follows. In Section 2, we recall some basic notions
and results of $G$-expectation. The formulation of our stochastic optimal
control problem under degenerate $G$ is given in Section 3. In Section 4, we
prove that the value function $V(\cdot,\cdot)$ is deterministic, and obtain
the DPP. In Section 5, we show that $V(\cdot,\cdot)$ is the unique viscosity
solution to the related second-order fully nonlinear HJB equation under
degenerate case.

\section{Preliminaries}

We recall some basic notions and results of $G$-expectation. The readers may
refer to \cite{P2019, HJPS1, HJPS} for more details.

Let $T>0$ be fixed and let $\Omega_{T}=C_{0}([0,T];\mathbb{R}^{d})$ be the
space of $\mathbb{R}^{d}$-valued continuous functions on $[0,T]$ with
$\omega_{0}=0$. The canonical process $B_{t}(\omega):=\omega_{t}$, for
$\omega \in \Omega_{T}$ and $t\in \lbrack0,T]$. For each given $t\in \lbrack0,T]$,
set%
\[
Lip(\Omega_{t}):=\{ \varphi(B_{t_{1}},B_{t_{2}}-B_{t_{1}},\ldots,B_{t_{N}%
}-B_{t_{N-1}}):N\geq1,t_{1}<\cdots<t_{N}\leq t,\varphi \in C_{b.Lip}%
(\mathbb{R}^{d\times N})\},
\]
where $C_{b.Lip}(\mathbb{R}^{d\times N})$ denotes the space of bounded
Lipschitz functions on $\mathbb{R}^{d\times N}$. It is easy to verify that%
\[
Lip(\Omega_{t}):=\{ \phi(B_{t_{1}},B_{t_{2}},\ldots,B_{t_{N}}):N\geq
1,t_{1}<\cdots<t_{N}\leq t,\phi \in C_{b.Lip}(\mathbb{R}^{d\times N})\}.
\]

Let $G:\mathbb{S}_{d}\rightarrow \mathbb{R}$ be a given monotonic and sublinear
function, where $\mathbb{S}_{d}$ denotes the set of $d\times d$ symmetric
matrices. Then there exists a bounded and convex set $\Sigma \subset
\mathbb{S}_{d}^{+}$ such that%
\[
G(A)=\frac{1}{2}\sup_{\gamma \in \Sigma}\mathrm{tr}[A\gamma]\text{ for }%
A\in \mathbb{S}_{d},
\]
where $\mathbb{S}_{d}^{+}$ denotes the set of $d\times d$ nonnegative
matrices. If there exists a $\delta>0$ such that $\gamma \geq \delta I_{d}$ for
any $\gamma \in \Sigma$, $G$ is called non-degenerate. Otherwise, $G$ is called
degenerate. In particular, if $d=1$, then%
\[
G(a)=\frac{1}{2}(\bar{\sigma}^{2}a^{+}-\underline{\sigma}^{2}a^{-})\text{ for
}a\in \mathbb{R},
\]
where $\bar{\sigma}^{2}=\sup \Sigma$ and $\underline{\sigma}^{2}=\inf \Sigma
\geq0$. Under this case, $G$ is degenerate iff $\underline{\sigma}^{2}=0$.

Peng \cite{P07a, P08a} constructed the $G$-expectation $\mathbb{\hat{E}%
}:Lip(\Omega_{T})\rightarrow \mathbb{R}$ and the conditional $G$-expectation
$\mathbb{\hat{E}}_{t}:Lip(\Omega_{T})\rightarrow Lip(\Omega_{t})$ as follows:

\begin{description}
\item[(i)] For $s\leq t\leq T$ and $\varphi \in C_{b.Lip}(\mathbb{R}^{d})$,
define $\mathbb{\hat{E}}[\varphi(B_{t}-B_{s})]=u(t-s,0)$, where $u$ is the
viscosity solution (see \cite{CIP}) of the following $G$-heat equation:%
\[
\partial_{t}u-G(\partial_{xx}^{2}u)=0,\ u(0,x)=\varphi(x).
\]

\item[(ii)] For $X=\varphi(B_{t_{1}},B_{t_{2}}-B_{t_{1}},\ldots,B_{t_{N}%
}-B_{t_{N-1}})\in Lip(\Omega_{T})$, define
\[
\mathbb{\hat{E}}_{t_{i}}[X]=\varphi_{i}(B_{t_{1}},\ldots,B_{t_{i}}-B_{t_{i-1}%
})\text{ for }i=N-1,\ldots,1\text{ and }\mathbb{\hat{E}}[X]=\mathbb{\hat{E}%
}[\varphi_{1}(B_{t_{1}})],
\]
where $\varphi_{N-1}(x_{1},\ldots,x_{N-1}):=\mathbb{\hat{E}}[\varphi
(x_{1},\ldots,x_{N-1},B_{t_{N}}-B_{t_{N-1}})]$ and
\[
\varphi_{i}(x_{1},\ldots,x_{i}):=\mathbb{\hat{E}}[\varphi_{i+1}(x_{1}%
,\ldots,x_{i},B_{t_{i+1}}-B_{t_{i}})]\text{ for }i=N-2,\ldots,1.
\]

\end{description}

The space $(\Omega_{T},Lip(\Omega_{T}),\mathbb{\hat{E}},(\mathbb{\hat{E}}%
_{t})_{t\in \lbrack0,T]})$ is a consistent sublinear expectation space, where
$\mathbb{\hat{E}}_{0}=\mathbb{\hat{E}}$. The canonical process $(B_{t}%
)_{t\in \lbrack0,T]}$ is called the $G$-Brownian motion under $\mathbb{\hat{E}%
}$.

For each $t\in \lbrack0,T]$, denote by $L_{G}^{p}(\Omega_{t})$ the completion
of $Lip(\Omega_{t})$ under the norm $||X||_{L_{G}^{p}}:=(\mathbb{\hat{E}%
}[|X|^{p}])^{1/p}$ for $p\geq1$. $\mathbb{\hat{E}}_{t}$ can be continuously
extended to $L_{G}^{1}(\Omega_{T})$ under the norm $||\cdot||_{L_{G}^{1}}$.

\begin{theorem}
(\cite{DHP11, HP09}) There exists a weakly compact set of probability measures
$\mathcal{P}$ on $(\Omega_{T},\mathcal{B}(\Omega_{T}))$ such that%
\[
\mathbb{\hat{E}}[X]=\sup_{P\in \mathcal{P}}E_{P}[X]\text{ for all }X\in
L_{G}^{1}(\Omega_{T}).
\]
$\mathcal{P}$ is called a set that represents $\mathbb{\hat{E}}$.
\end{theorem}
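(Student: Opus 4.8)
The plan is to build the set $\mathcal{P}$ explicitly, verify the representation first on the dense subspace $Lip(\Omega_{T})$ using the recursive definition of $\mathbb{\hat{E}}$, and then extend it to $L_{G}^{1}(\Omega_{T})$ by $L_{G}^{1}$-continuity of both sides. For the construction, fix an auxiliary filtered probability space $(\tilde{\Omega},\tilde{\mathcal{F}},(\tilde{\mathcal{F}}_{t}),\tilde{P})$ carrying a $d$-dimensional Brownian motion $W$, and for each $(\tilde{\mathcal{F}}_{t})$-progressively measurable $(\gamma_{s})_{s\in[0,T]}$ with values in $\Sigma$ let $P^{\gamma}$ be the law on $(\Omega_{T},\mathcal{B}(\Omega_{T}))$ of $B^{\gamma}_{\cdot}:=\int_{0}^{\cdot}\gamma_{s}^{1/2}\,dW_{s}$. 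Put $\mathcal{P}_{0}:=\{P^{\gamma}\}$ and let $\mathcal{P}$ be the closure of $\mathcal{P}_{0}$ for the topology of weak convergence. Since $\Sigma$ is bounded there is $c>0$ with $\mathrm{tr}\,\gamma\le c$ for all $\gamma\in\Sigma$; the Burkholder--Davis--Gundy inequality then gives $E_{P^{\gamma}}[|B_{t}-B_{s}|^{4}]\le C_{4}\,c^{2}|t-s|^{2}$ uniformly in $\gamma$, so by Kolmogorov's criterion $\mathcal{P}_{0}$ is tight and $\mathcal{P}$ is weakly compact.

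Next I would show $\mathbb{\hat{E}}[X]=\sup_{P\in\mathcal{P}}E_{P}[X]$ for every $X\in Lip(\Omega_{T})$; write $\tilde{\mathbb{E}}[X]$ for the right-hand side (for bounded continuous $X$ the supremum over $\mathcal{P}$ equals the supremum over $\mathcal{P}_{0}$, since passing to the weak closure does not change it). It suffices to check that $\tilde{\mathbb{E}}$ satisfies the same prescriptions (i) and (ii) that define $\mathbb{\hat{E}}$. For (i), given $\varphi\in C_{b.Lip}(\mathbb{R}^{d})$ and $s\le t$, stationarity and independence of increments of $W$ give $\tilde{\mathbb{E}}[\varphi(B_{t}-B_{s})]=\sup_{\gamma}E[\varphi(\int_{0}^{t-s}\gamma_{r}^{1/2}\,dW_{r})]=v(t-s,0)$, where $v(r,x):=\sup_{\gamma}E[\varphi(x+\int_{0}^{r}\gamma_{\cdot}^{1/2}\,dW)]$; by the verification theorem for this stochastic control problem $v$ is the unique bounded continuous viscosity solution of $\partial_{r}v-\sup_{\gamma\in\Sigma}\frac{1}{2}\mathrm{tr}[\gamma\,\partial_{xx}^{2}v]=0$, $v(0,\cdot)=\varphi$, i.e. of the $G$-heat equation, so $\tilde{\mathbb{E}}[\varphi(B_{t}-B_{s})]=\mathbb{\hat{E}}[\varphi(B_{t}-B_{s})]$. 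For (ii) one iterates this using the dynamic programming principle for $\sup_{\gamma}E[\cdot]$ together with the stability of $\mathcal{P}_{0}$ under pasting of the controls $\gamma$ at the deterministic times $t_{1}<\cdots<t_{N}$; this reproduces exactly the nested-expectation formula defining $\mathbb{\hat{E}}[\varphi(B_{t_{1}},\ldots,B_{t_{N}})]$. Hence $\tilde{\mathbb{E}}=\mathbb{\hat{E}}$ on $Lip(\Omega_{T})$.

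Finally I would extend the identity to $L_{G}^{1}(\Omega_{T})$. For $Y\in Lip(\Omega_{T})$ the function $|Y|$ again lies in $Lip(\Omega_{T})\subset C_{b}(\Omega_{T})$, so by the step just proved $E_{P}[|Y|]\le\sup_{Q\in\mathcal{P}}E_{Q}[|Y|]=\mathbb{\hat{E}}[|Y|]$ for every $P\in\mathcal{P}$. Consequently, if $X\in L_{G}^{1}(\Omega_{T})$ and $X_{n}\in Lip(\Omega_{T})$ satisfy $\mathbb{\hat{E}}[|X-X_{n}|]\to 0$, then $(X_{n})$ is Cauchy in $L^{1}(P)$ uniformly in $P\in\mathcal{P}$, so $E_{P}[X]:=\lim_{n}E_{P}[X_{n}]$ is well defined, $\sup_{P\in\mathcal{P}}E_{P}[|X-X_{n}|]\le\mathbb{\hat{E}}[|X-X_{n}|]\to 0$, and $|\mathbb{\hat{E}}[X]-\mathbb{\hat{E}}[X_{n}]|\le\mathbb{\hat{E}}[|X-X_{n}|]\to 0$. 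Passing to the limit in the identity on $Lip(\Omega_{T})$,
\[
\big|\,\mathbb{\hat{E}}[X]-\sup_{P\in\mathcal{P}}E_{P}[X]\,\big|\le\mathbb{\hat{E}}[|X-X_{n}|]+\sup_{P\in\mathcal{P}}E_{P}[|X-X_{n}|]\longrightarrow 0,
\]
which gives $\mathbb{\hat{E}}[X]=\sup_{P\in\mathcal{P}}E_{P}[X]$.

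The main obstacle is the second step: identifying $v(r,x)=\sup_{\gamma}E[\varphi(x+\int_{0}^{r}\gamma^{1/2}\,dW)]$ with the viscosity solution of the $G$-heat equation requires the dynamic programming principle and a verification/uniqueness argument for a fully nonlinear parabolic PDE, and upgrading the one-step identity to the multi-step recursion (ii) requires the measurable-selection and pasting arguments that make $\sup_{P\in\mathcal{P}_{0}}E_{P}[\cdot]$ behave consistently under conditioning. A cleaner but less constructive alternative for producing \emph{some} representing family is to apply the Hahn--Banach theorem to the sublinear functional $\mathbb{\hat{E}}$ on $Lip(\Omega_{T})$, obtaining for each $X$ a linear $E_{X}\le\mathbb{\hat{E}}$ with $E_{X}[X]=\mathbb{\hat{E}}[X]$, and then proving countable additivity via a tightness/Daniell argument to realize $E_{X}$ as a probability measure; the explicit route above is preferable here because it simultaneously delivers the weak compactness asserted in the statement.
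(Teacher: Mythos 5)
The paper does not prove this theorem: it is imported verbatim from the cited references (Denis--Hu--Peng 2011 and Hu--Peng 2009), so there is no internal proof to compare against. Your outline is, in substance, exactly the construction used in those references: take $\mathcal{P}_{0}$ to be the laws of $\int_{0}^{\cdot}\gamma_{s}^{1/2}\,dW_{s}$ over $\Sigma$-valued progressively measurable controls, get weak compactness of the closure from the uniform fourth-moment estimate via Kolmogorov--Prokhorov, identify $\sup_{P}E_{P}$ with $\mathbb{\hat{E}}$ on $Lip(\Omega_{T})$ through the stochastic-control representation of the $G$-heat equation plus pasting at the partition times, and pass to $L_{G}^{1}$ by density. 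The outline is sound, and the density/extension step is handled correctly (in particular you rightly note that $Lip(\Omega_{T})\subset C_{b}(\Omega_{T})$, so the supremum is unchanged by taking the weak closure, and that $|X_{m}-X_{n}|$ stays in $Lip(\Omega_{T})$ so the uniform-in-$P$ Cauchy estimate is legitimate). The one honest caveat, which you already flag, is that nearly all the real work sits in the middle step: proving that $v(r,x)=\sup_{\gamma}E[\varphi(x+\int_{0}^{r}\gamma^{1/2}dW)]$ is the (unique, bounded, continuous) viscosity solution of $\partial_{r}v-G(\partial_{xx}^{2}v)=0$, and that the dynamic programming/pasting argument reproduces Peng's nested conditional expectations in (ii); as written these are invoked rather than proved, so the proposal is a correct proof sketch rather than a complete proof, matching the route of the cited sources.
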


For this $\mathcal{P}$, we define capacity%
\[
c(A):=\sup_{P\in \mathcal{P}}P(A)\text{ for }A\in \mathcal{B}(\Omega_{T}).
\]
A set $A\in \mathcal{B}(\Omega_{T})$ is polar if $c(A)=0$. A property holds
\textquotedblleft quasi-surely" (q.s. for short) if it holds outside a polar
set. In the following, we do not distinguish two random variables $X$ and $Y$
if $X=Y$ q.s.

\begin{definition}
Let $M_{G}^{0}(0,T)$ be the space of simple processes in the following form:
for each $N\in \mathbb{N}$ and $0=t_{0}<\cdots<t_{N}=T$,%
\[
\eta_{t}=\sum_{i=0}^{N-1}\xi_{i}I_{[t_{i},t_{i+1})}(t),
\]
where $\xi_{i}\in Lip(\Omega_{t_{i}})$ for $i=0,1,\ldots,N-1$.
\end{definition}

Denote by $M_{G}^{p}(0,T)$ the completion of $M_{G}^{0}(0,T)$ under the norm
$||\eta||_{M_{G}^{p}}:=\left(  \mathbb{\hat{E}}[\int_{0}^{T}|\eta_{t}%
|^{p}dt]\right)  ^{1/p}$ for $p\geq1$. For each $\eta^{i}\in M_{G}^{2}(0,T)$,
$i=1,\ldots,d$, denote $\eta=(\eta^{1},\ldots,\eta^{d})^{T}\in M_{G}%
^{2}(0,T;\mathbb{R}^{d})$, the $G$-It\^{o} integral $\int_{0}^{T}\eta_{t}%
^{T}dB_{t}$ is well defined.

\section{Formulation of the control problem}

Let $U$ be a given nonempty compact set of $\mathbb{R}^{m}$. For each
$t\in \lbrack0,T]$, we denote by
\[
\mathcal{U}[t,T]:=\{u:u\in M_{G}^{2}(t,T;\mathbb{R}^{m})\text{ with values in
}U\}
\]
the set of admissible controls on $[t,T]$.

In the following, we use Einstein summation convention. For each given
$t\in \lbrack0,T]$, $\xi \in L_{G}^{2}(\Omega_{t};\mathbb{R}^{n})=\{(\xi
_{1},\ldots,\xi_{n})^{T}:\xi_{i}\in L_{G}^{2}(\Omega_{t})$, $i\leq n\}$ and
$u\in \mathcal{U}[t,T]$, we consider the following $G$-SDE:%
\begin{equation}
\left \{
\begin{array}
[c]{rl}%
dX_{s}^{t,\xi,u}= & b(s,X_{s}^{t,\xi,u},u_{s})ds+h_{ij}(s,X_{s}^{t,\xi
,u},u_{s})d\langle B^{i},B^{j}\rangle_{s}+\sigma(s,X_{s}^{t,\xi,u}%
,u_{s})dB_{s},\\
X_{t}^{t,\xi,u}= & \xi,
\end{array}
\right.  \label{e3-1}%
\end{equation}
where $s\in \lbrack t,T]$, $\langle B\rangle=(\langle B^{i},B^{j}%
\rangle)_{i,j=1}^{d}$ is the quadratic variation of $B$. The cost function is
defined by%
\begin{equation}
J(t,\xi,u)=\mathbb{\hat{E}}_{t}\left[  \Phi(X_{T}^{t,\xi,u})+\int_{t}%
^{T}f(s,X_{s}^{t,\xi,u},u_{s})ds+\int_{t}^{T}g_{ij}(s,X_{s}^{t,\xi,u}%
,u_{s})d\langle B^{i},B^{j}\rangle_{s}\right]  . \label{e3-2}%
\end{equation}

Suppose that $b$, $h_{ij}:[0,T]\times \mathbb{R}^{n}\times U\rightarrow
\mathbb{R}^{n}$, $\sigma:[0,T]\times \mathbb{R}^{n}\times U\rightarrow
\mathbb{R}^{n\times d}$, $\Phi:\mathbb{R}^{n}\rightarrow \mathbb{R}$, $f$,
$g_{ij}:[0,T]\times \mathbb{R}^{n}\times U\rightarrow \mathbb{R}$ are
deterministic functions and satisfy the following conditions:

\begin{description}
\item[(H1)] There exists a constant $L>0$ such that for any $(s,x,v)$,
$(s,x^{\prime},v^{\prime})\in \lbrack0,T]\times \mathbb{R}^{n}\times U$,%
\[%
\begin{array}
[c]{l}%
|b(s,x,v)-b(s,x^{\prime},v^{\prime})|+|h_{ij}(s,x,v)-h_{ij}(s,x^{\prime
},v^{\prime})|+|\sigma(s,x,v)-\sigma(s,x^{\prime},v^{\prime})|\\
\leq L(|x-x^{\prime}|+|v-v^{\prime}|),\\
|f(s,x,v)-f(s,x^{\prime},v^{\prime})|+|g_{ij}(s,x,v)-g_{ij}(s,x^{\prime
},v^{\prime})|+|\Phi(x)-\Phi(x^{\prime})|\\
\leq L[(1+|x|+|x^{\prime}|)|x-x^{\prime}|+|v-v^{\prime}|];
\end{array}
\]

\item[(H2)] $h_{ij}=h_{ji}$ and $g_{ij}=g_{ji}$; $b,$ $h_{ij},$ $\sigma,$ $f,$
$g_{ij}$ are continuous in $s$.
\end{description}

We have the following theorems.

\begin{theorem}
(\cite{P2019}) Let Assumptions (H1) and (H2) hold. Then, for each $\xi \in
L_{G}^{2}(\Omega_{t};\mathbb{R}^{n})$ and $u\in \mathcal{U}[t,T]$, there exists
a unique solution $X\in M_{G}^{2}(t,T;\mathbb{R}^{n})$ for the $G$-SDE
(\ref{e3-1}).
\end{theorem}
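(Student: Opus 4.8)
The plan is to establish existence and uniqueness for the $G$-SDE \eqref{e3-1} by the standard Picard iteration / Banach fixed-point argument, carried out in the complete space $M_G^2(t,T;\mathbb{R}^n)$, relying only on the Lipschitz and growth conditions (H1)--(H2) together with the basic estimates for $G$-It\^o integrals and $d\langle B^i,B^j\rangle_s$ integrals recalled in Section 2. Since $u\in\mathcal{U}[t,T]$ is fixed throughout, the control merely enters as a measurable coefficient with the right integrability, so the problem reduces to a $G$-SDE with random Lipschitz coefficients of linear growth — a setting already covered in \cite{P2019}; the task is to check that the hypotheses match.

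First I would define the solution map $\Gamma:M_G^2(t,T;\mathbb{R}^n)\to M_G^2(t,T;\mathbb{R}^n)$ by
\[
\Gamma(Y)_s=\xi+\int_t^s b(r,Y_r,u_r)\,dr+\int_t^s h_{ij}(r,Y_r,u_r)\,d\langle B^i,B^j\rangle_r+\int_t^s \sigma(r,Y_r,u_r)\,dB_r,
\]
and verify that $\Gamma$ maps $M_G^2$ into itself: this uses the linear growth in $x$ of $b,h_{ij},\sigma$ (a consequence of (H1) plus continuity in $v$ on the compact set $U$, which yields boundedness in $v$), the fact that $\xi\in L_G^2$, $u\in M_G^2$, and the It\^o isometry-type inequality $\mathbb{\hat E}[|\int_t^s\sigma\,dB_r|^2]\le C\,\mathbb{\hat E}[\int_t^s|\sigma|^2\,dr]$ for the stochastic term, along with the analogous bound $\mathbb{\hat E}[|\int_t^s h_{ij}\,d\langle B^i,B^j\rangle_r|^2]\le C\,\mathbb{\hat E}[\int_t^s|h_{ij}|^2\,dr]$ valid since the mutual variation processes have bounded, absolutely continuous densities. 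Second, I would prove the contraction estimate: for $Y,Y'\in M_G^2$,
\[
\mathbb{\hat E}\Big[\sup_{t\le s\le \tau}|\Gamma(Y)_s-\Gamma(Y')_s|^2\Big]\le C(\tau-t)\,\mathbb{\hat E}\Big[\int_t^\tau |Y_r-Y_r'|^2\,dr\Big],
\]
using the Lipschitz bounds in (H1) (note the $u$-dependence cancels since the control is the same) together with the Burkholder--Davis--Gundy inequality in the $G$-framework for the martingale part. Choosing $\tau-t$ small enough makes $\Gamma$ a contraction; one then either iterates on small intervals and patches, or applies the equivalent-norm trick $\|Y\|_\beta^2:=\mathbb{\hat E}[\int_t^T e^{\beta r}|Y_r|^2\,dr]$ with $\beta$ large to get a contraction on the whole $[t,T]$ directly. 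Uniqueness follows from the same estimate via Gr\"onwall's inequality.

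The main obstacle is not conceptual but technical: one must be careful that the quadratic-variation integral term $\int h_{ij}\,d\langle B^i,B^j\rangle_s$ is handled correctly, since $\langle B^i,B^j\rangle$ is only of bounded variation q.s.\ with density controlled by $\Sigma$, so the relevant estimate is $\big|\int_t^s h_{ij}\,d\langle B^i,B^j\rangle_r\big|\le C\int_t^s |h(r,Y_r,u_r)|\,dr$ q.s., after which Cauchy--Schwarz in $r$ gives the $M_G^2$ bound — and in the degenerate setting this still goes through because degeneracy only shrinks $\Sigma$, it does not destroy the bound. A secondary point requiring care is confirming that the compositions $b(\cdot,Y_\cdot,u_\cdot)$, etc., land in $M_G^2(t,T;\mathbb{R}^n)$ as genuine elements (not merely measurable): this is where continuity in $s$ from (H2) and Lipschitz continuity in $(x,v)$ from (H1) are used, approximating $Y$ and $u$ by simple processes and passing to the limit. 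Since all of this is essentially a citation to the well-posedness theory in \cite{P2019}, I would present the proof compactly, emphasizing only the verification that (H1)--(H2) supply exactly the Lipschitz-plus-growth structure that theory requires, and that nothing in the argument uses non-degeneracy of $G$.
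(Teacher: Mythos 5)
Your proposal is correct and is essentially the argument behind the cited result: the paper gives no proof of its own but refers to \cite{P2019}, where well-posedness of $G$-SDEs is established by exactly this contraction-mapping/Picard scheme on $M_G^2(t,T;\mathbb{R}^n)$, using the Lipschitz and linear-growth structure from (H1)--(H2), the $L^2$-estimates for the $G$-It\^o integral, and the bound $|\langle B^i,B^j\rangle_s-\langle B^i,B^j\rangle_r|\leq \bar{\sigma}^2(s-r)$ for the quadratic-variation terms. Your observation that non-degeneracy of $G$ plays no role here is also the correct reason the theorem transfers unchanged to the degenerate setting of this paper.
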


\begin{theorem}
\label{th-ex}(\cite{HJ1, P2019}) Let Assumptions (H1) and (H2) hold, and let
$\xi,$ $\xi^{\prime}\in L_{G}^{p}(\Omega_{t};\mathbb{R}^{n})$ with $p\geq2$
and $u,$ $v\in \mathcal{U}[t,T]$. Then, for each $\delta \in \lbrack0,T-t]$, we
have%
\[%
\begin{array}
[c]{l}%
\mathbb{\hat{E}}_{t}[|X_{t+\delta}^{t,\xi,u}-X_{t+\delta}^{t,\xi^{\prime}%
,v}|^{2}]\leq C(|\xi-\xi^{\prime}|^{2}+\mathbb{\hat{E}}_{t}[\int_{t}%
^{t+\delta}|u_{s}-v_{s}|^{2}ds]),\\
\mathbb{\hat{E}}_{t}[|X_{t+\delta}^{t,\xi,u}|^{p}]\leq C(1+|\xi|^{p}),\\
\mathbb{\hat{E}}_{t}\left[  \underset{s\in \lbrack t,t+\delta]}{\sup}%
|X_{s}^{t,\xi,u}-\xi|^{p}\right]  \leq C(1+|\xi|^{p})\delta^{p/2},
\end{array}
\]
where $C>0$ depends on $T$, $\bar{\sigma}^{2}=\sup \{|\gamma|:\gamma \in
\Sigma \}$, $p$ and $L$.
\end{theorem}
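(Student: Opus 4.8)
These three bounds are the conditional $\mathbb{\hat{E}}_{t}$-analogues of the classical a priori estimates for $G$-SDEs, and the plan is to reproduce the argument of \cite{HJ1, P2019}. First I would rewrite (\ref{e3-1}) in integral form
\[
X_{s}^{t,\xi,u}=\xi+\int_{t}^{s}b(r,X_{r}^{t,\xi,u},u_{r})\,dr+\int_{t}^{s}h_{ij}(r,X_{r}^{t,\xi,u},u_{r})\,d\langle B^{i},B^{j}\rangle_{r}+\int_{t}^{s}\sigma(r,X_{r}^{t,\xi,u},u_{r})\,dB_{r},
\]
and estimate the four terms separately, using three tools: the conditional Burkholder--Davis--Gundy inequality for the $G$-It\^{o} integral, $\mathbb{\hat{E}}_{t}[\sup_{r\in[t,s]}|\int_{t}^{r}\eta_{\tau}^{T}dB_{\tau}|^{p}]\leq C\,\mathbb{\hat{E}}_{t}[(\int_{t}^{s}|\eta_{\tau}|^{2}d\tau)^{p/2}]$; the fact that $\langle B\rangle$ is absolutely continuous with density taking values in $\Sigma$, so the $h_{ij}$-term is a Lebesgue integral dominated by $C\bar{\sigma}^{2}\int_{t}^{s}|h_{ij}(r,X_{r}^{t,\xi,u},u_{r})|\,dr$; and the linear growth of $b$, $h_{ij}$ and $\sigma$, which follows from (H1), the continuity in $s$ in (H2) and the compactness of $U$, namely $|b(s,x,v)|+|h_{ij}(s,x,v)|+|\sigma(s,x,v)|\leq C(1+|x|)$ on $[0,T]\times\mathbb{R}^{n}\times U$.

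For the moment bound I would apply $|\cdot|^{p}$, split the four integrals with $|a_{1}+\cdots+a_{4}|^{p}\leq 4^{p-1}\sum_{k}|a_{k}|^{p}$, apply H\"{o}lder in time to the three Lebesgue-type integrals and BDG plus H\"{o}lder to the stochastic one, and insert the linear growth bound; taking the supremum over $[t,s]$ and then $\mathbb{\hat{E}}_{t}$ gives
\[
\mathbb{\hat{E}}_{t}\Big[\sup_{r\in[t,s]}|X_{r}^{t,\xi,u}|^{p}\Big]\leq C(1+|\xi|^{p})+C\int_{t}^{s}\mathbb{\hat{E}}_{t}\Big[\sup_{\tau\in[t,r]}|X_{\tau}^{t,\xi,u}|^{p}\Big]\,dr,
\]
and Gronwall's lemma yields the second estimate (and, a fortiori, its $\sup$-version). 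To make the Gronwall step rigorous one must know the left-hand side is q.s. finite beforehand; I would obtain this by first running the same computation on the Picard iterates $X^{0}\equiv\xi$, $X_{s}^{k+1}=\xi+\int_{t}^{s}b(r,X_{r}^{k},u_{r})dr+\cdots$, which are manifestly bounded in $M_{G}^{p}$ when $\xi$ is bounded (the general case then following by truncation and density), deriving a uniform-in-$k$ bound and passing to the limit via the well-posedness theorem above.

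The first estimate is proved the same way for $\hat{X}_{s}:=X_{s}^{t,\xi,u}-X_{s}^{t,\xi^{\prime},v}$: subtracting the integral equations, (H1) bounds every coefficient difference by $L(|\hat{X}_{r}|+|u_{r}-v_{r}|)$, and after squaring, applying $\mathbb{\hat{E}}_{t}$, BDG and the bound on $d\langle B^{i},B^{j}\rangle$, one gets $\mathbb{\hat{E}}_{t}[\sup_{r\in[t,s]}|\hat{X}_{r}|^{2}]\leq C|\xi-\xi^{\prime}|^{2}+C\,\mathbb{\hat{E}}_{t}[\int_{t}^{s}|u_{r}-v_{r}|^{2}dr]+C\int_{t}^{s}\mathbb{\hat{E}}_{t}[\sup_{\tau\in[t,r]}|\hat{X}_{\tau}|^{2}]dr$, so Gronwall on $[t,t+\delta]$ and specialization to $s=t+\delta$ finish it. For the third estimate I would use the moment bound just established to control $\mathbb{\hat{E}}_{t}[|b(r,X_{r}^{t,\xi,u},u_{r})|^{p}]$ and its analogues by $C(1+|\xi|^{p})$, then apply BDG and H\"{o}lder to $X_{s}^{t,\xi,u}-\xi$ over $[t,t+\delta]$: the three Lebesgue-type terms contribute $C(1+|\xi|^{p})\delta^{p}$ and the stochastic term $C(1+|\xi|^{p})\delta^{p/2}$, and since $\delta\leq T$ we have $\delta^{p}\leq T^{p/2}\delta^{p/2}$, giving the asserted $C(1+|\xi|^{p})\delta^{p/2}$.

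The computations above are routine; the point needing care — and the one I expect to be the main obstacle — is the conditional Gronwall argument: guaranteeing a priori finiteness of the quantities involved and using the time-consistency and sub-additivity of the nonlinear operator $\mathbb{\hat{E}}_{t}$ correctly, together with the validity of the conditional BDG inequality and the estimate on $d\langle B\rangle$ in the stated form. None of these ingredients is specific to the non-degenerate case, so the degenerate setting here introduces no new difficulty and the result follows exactly as in \cite{HJ1, P2019}.
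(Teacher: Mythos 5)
The paper does not prove this theorem; it is quoted directly from \cite{HJ1, P2019}, so there is no in-paper argument to compare against. Your sketch correctly reconstructs the standard proof from those references (integral form, linear growth from (H1)--(H2) and compactness of $U$, the bound $|d\langle B^{i},B^{j}\rangle_{s}|\leq\bar{\sigma}^{2}ds$, conditional BDG, and Gronwall), and your observation that only the upper bound $\bar{\sigma}^{2}$ on $\Sigma$ enters---so degeneracy is irrelevant here---is exactly why the paper can cite the non-degenerate references without modification.
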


Our stochastic optimal control problem is to find $u\in \mathcal{U}[t,T]$ which
minimizes the cost function $J(t,x,u)$ for each given $t\in \lbrack0,T]$ and
$x\in \mathbb{R}^{n}$. For this purpose, we need the following definition of
\ essential infimum of $\{J(t,x,u):u\in \mathcal{U}[t,T]\}$.

\begin{definition}
(\cite{HJ1}) \label{def-3-1}The essential infimum of $\{J(t,x,u):u\in
\mathcal{U}[t,T]\}$, denoted by $\underset{u\in \mathcal{U}[t,T]}{ess\inf
}J(t,x,u)$, is a random variable $\zeta \in L_{G}^{2}(\Omega_{t})$ satisfying:

\begin{description}
\item[(i)] for any $u\in \mathcal{U}[t,T]$, $\zeta \leq J(t,x,u)$ q.s.;

\item[(ii)] if $\eta$ is a random variable satisfying $\eta \leq J(t,x,u)$ q.s.
for any $u\in \mathcal{U}[t,T]$, then $\zeta \geq \eta$ q.s.
\end{description}
\end{definition}

For each $(t,x)\in \lbrack0,T]\times \mathbb{R}^{n}$, we define the value
function%
\begin{equation}
V(t,x):=\underset{u\in \mathcal{U}[t,T]}{ess\inf}J(t,x,u). \label{e3-3}%
\end{equation}

In this paper, we consider the degenerate $d$-dimensional $G$-Brownian motion
$B=(B^{1},\ldots,B^{d})^{T}$ with $d\geq2$. For each given $\beta \in
\mathbb{R}^{d}$, by Proposition 3.1.5 in \cite{P2019}, we know that $B^{\beta
}:=\beta^{T}B$ is a $1$-dimensional $G_{\beta}$-Brownian motion with%
\begin{equation}
G_{\beta}(a)=G(\beta \beta^{T})a^{+}+G(-\beta \beta^{T})a^{-}\text{ for }%
a\in \mathbb{R}. \label{e3-4}%
\end{equation}
In particular, $B^{i}$ is a $1$-dimensional $G_{i}$-Brownian motion with%
\[
G_{i}(a)=\frac{1}{2}(\bar{\sigma}_{i}^{2}a^{+}-\underline{\sigma}_{i}^{2}%
a^{-})\text{ for }a\in \mathbb{R},
\]
where $\bar{\sigma}_{i}^{2}=2G(e_{i}e_{i}^{T})$, $\underline{\sigma}_{i}%
^{2}=-2G(-e_{i}e_{i}^{T})$, $\{e_{i}:i\leq d\}$ is the standard basis of
$\mathbb{R}^{d}$. If $\underline{\sigma}_{i}^{2}=0$ for any $i\leq d$, then
$I_{[a,b]}(B_{t}^{i})\not \in L_{G}^{1}(\Omega_{t})$ for any $i\leq d$ and
non-empty interval $[a,b]$ by Theorem 4.1 in \cite{LL}. Under the case
$U=\{0,1\}$, $\mathcal{U}[t,T]$ contains only deterministic controls, which
causes our control problem (\ref{e3-3}) to be ill-posed. Thus we need the
following assumption:

\begin{description}
\item[(H3)] There exists an $i^{\ast}\leq d$ such that $\underline{\sigma
}_{i^{\ast}}^{2}=-2G(-e_{i^{\ast}}e_{i^{\ast}}^{T})=\inf_{\gamma \in \Sigma}\{
\gamma_{i^{\ast}i^{\ast}}\}>0$, where $\gamma=(\gamma_{ij})_{i,j=1}^{d}$.
\end{description}

\begin{remark}
The assumption (H3) implies that $B^{i^{\ast}}$ is a $1$-dimensional
non-degenerate $G_{i^{\ast}}$-Brownian motion. It is clear that
\[
G((a_{ij})_{i,j=1}^{d})=\frac{1}{2}\sum_{i=1}^{d-1}\bar{\sigma}_{i}^{2}%
a_{ii}{}^{+}+\frac{1}{2}(\bar{\sigma}_{d}^{2}a_{dd}^{+}-\underline{\sigma}%
_{d}^{2}a_{dd}^{-})
\]
satisfies (H3), where $\bar{\sigma}_{i}^{2}>0$ for $i\leq d-1$ and
$\bar{\sigma}_{d}^{2}\geq \underline{\sigma}_{d}^{2}>0$, and $B^{d}$ is a
$1$-dimensional non-degenerate $G_{d}$-Brownian motion.
\end{remark}

In the following we will prove that $V(\cdot,\cdot)$ is deterministic.
Furthermore, we will obtain the dynamic programming principle and the related
fully nonlinear HJB equation under degenerate case.

\section{Dynamic programming principle}

We use the following notations: for each given $0\leq t\leq s\leq T$,%

\[%
\begin{array}
[c]{l}%
Lip(\Omega_{s}^{t}):=\{ \varphi(B_{t_{1}}-B_{t},\ldots,B_{t_{N}}-B_{t}%
):N\geq1,t_{1},\ldots,t_{N}\in \lbrack t,s],\varphi \in C_{b.Lip}(\mathbb{R}%
^{d\times N})\};\\
L_{G}^{p}(\Omega_{s}^{t}):=\{ \text{the completion of }Lip(\Omega_{s}%
^{t})\text{ under the norm }||\cdot||_{L_{G}^{p}}\} \text{, }p\geq1;\\
M_{G}^{0,t}(t,T):=\{ \eta_{s}=\sum_{k=0}^{N-1}\xi_{k}I_{[t_{k},t_{k+1}%
)}(s):t=t_{0}<\cdots<t_{N}=T,\xi_{k}\in Lip(\Omega_{t_{k}}^{t})\};\\
M_{G}^{p,t}(t,T):=\{ \text{the completion of }M_{G}^{0,t}(t,T)\text{ under the
norm }||\cdot||_{M_{G}^{p}}\} \text{, }p\geq1;\\
\mathcal{U}^{t}[t,T]:=\{u:u\in M_{G}^{2,t}(t,T;\mathbb{R}^{m})\text{ with
values in }U\};\\
\mathbb{U}[t,T]:=\{u=\sum_{k=1}^{N}I_{A_{k}}u^{k}:N\geq1,u^{k}\in
\mathcal{U}^{t}[t,T],I_{A_{k}}\in L_{G}^{2}(\Omega_{t}),(A_{k})_{k=1}%
^{N}\text{ is a partition of }\Omega \}.
\end{array}
\]

For simplicity, the constant $C$ will change from line to line in the
following. In order to prove that $V(\cdot,\cdot)$ is deterministic, we need
the following lemmas.

\begin{lemma}
\label{le11}Let Assumption (H3) hold. Then there exists a constant $\lambda>0$
such that $B^{i}+\lambda B^{i^{\ast}}$ is non-degenerate for $i\leq d$ and
$i\not =i^{\ast}$.
\end{lemma}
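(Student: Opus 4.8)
The plan is to exploit the one-dimensional reduction of Proposition 3.1.5 in \cite{P2019}: for any $\beta\in\mathbb{R}^{d}$, the process $B^{\beta}=\beta^{T}B$ is a $1$-dimensional $G_{\beta}$-Brownian motion with $G_{\beta}(a)=G(\beta\beta^{T})a^{+}+G(-\beta\beta^{T})a^{-}$, and it is non-degenerate precisely when its lower variance $\underline{\sigma}_{\beta}^{2}:=-2G(-\beta\beta^{T})$ is strictly positive. Since $G(A)=\tfrac{1}{2}\sup_{\gamma\in\Sigma}\mathrm{tr}[A\gamma]$, one has $-2G(-\beta\beta^{T})=\inf_{\gamma\in\Sigma}\mathrm{tr}[\beta\beta^{T}\gamma]=\inf_{\gamma\in\Sigma}\beta^{T}\gamma\beta$. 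Taking $\beta=e_{i}+\lambda e_{i^{\ast}}$, it therefore suffices to find $\lambda>0$ so that for every $i\leq d$ with $i\neq i^{\ast}$,
\[
\inf_{\gamma\in\Sigma}\big(\gamma_{ii}+2\lambda\gamma_{ii^{\ast}}+\lambda^{2}\gamma_{i^{\ast}i^{\ast}}\big)>0.
\]

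First I would record two facts. (a) For any $\gamma\in\Sigma\subset\mathbb{S}_{d}^{+}$, the $2\times2$ principal submatrix of $\gamma$ indexed by $\{i,i^{\ast}\}$ is nonnegative, so $|\gamma_{ii^{\ast}}|\leq\sqrt{\gamma_{ii}\gamma_{i^{\ast}i^{\ast}}}$, whence
\[
\gamma_{ii}+2\lambda\gamma_{ii^{\ast}}+\lambda^{2}\gamma_{i^{\ast}i^{\ast}}\geq\big(\sqrt{\gamma_{ii}}-\lambda\sqrt{\gamma_{i^{\ast}i^{\ast}}}\big)^{2}\geq0.
\]
(b) By Assumption (H3), $\gamma_{i^{\ast}i^{\ast}}\geq\underline{\sigma}_{i^{\ast}}^{2}>0$ for all $\gamma\in\Sigma$, while by boundedness of $\Sigma$, $\gamma_{ii}\leq\bar{\sigma}_{i}^{2}=2G(e_{i}e_{i}^{T})<\infty$.

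Next I would upgrade the nonnegativity in (a) to strict positivity on the closure $\bar{\Sigma}$, which is compact, convex, and contained in $\mathbb{S}_{d}^{+}$, and over which the infimum agrees with the infimum over $\Sigma$ by continuity. Suppose $\gamma\in\bar{\Sigma}$ makes the quadratic vanish. Then equality must hold in (a), which forces $\gamma_{ii}=\lambda^{2}\gamma_{i^{\ast}i^{\ast}}$; combined with (b) this gives $\lambda^{2}\underline{\sigma}_{i^{\ast}}^{2}\leq\gamma_{ii}\leq\bar{\sigma}_{i}^{2}$, i.e. $\lambda\leq\bar{\sigma}_{i}/\underline{\sigma}_{i^{\ast}}$. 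Hence, choosing
\[
\lambda>\max_{i\leq d,\ i\neq i^{\ast}}\frac{\bar{\sigma}_{i}}{\underline{\sigma}_{i^{\ast}}},
\]
the continuous map $\gamma\mapsto\gamma_{ii}+2\lambda\gamma_{ii^{\ast}}+\lambda^{2}\gamma_{i^{\ast}i^{\ast}}$ is strictly positive on the compact set $\bar{\Sigma}$, so its infimum is positive. This yields $\underline{\sigma}_{\beta}^{2}>0$ for $\beta=e_{i}+\lambda e_{i^{\ast}}$, i.e. $B^{i}+\lambda B^{i^{\ast}}$ is non-degenerate for all $i\neq i^{\ast}$.

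The only delicate point is the passage from ``$\geq0$'' to ``$>0$'': the Cauchy--Schwarz estimate alone gives merely nonnegativity, and one genuinely needs both the uniform lower bound $\gamma_{i^{\ast}i^{\ast}}\geq\underline{\sigma}_{i^{\ast}}^{2}>0$ from (H3) and the uniform upper bound on $\gamma_{ii}$ from the boundedness of $\Sigma$, together with the compactness of $\bar{\Sigma}$, to identify an admissible $\lambda$. Everything else is routine.
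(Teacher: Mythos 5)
Your proof is correct, but it takes a different route from the paper's. Both arguments reduce, via $G_{\beta}(a)=G(\beta\beta^{T})a^{+}+G(-\beta\beta^{T})a^{-}$ with $\beta=e_{i}+\lambda e_{i^{\ast}}$, to showing $\inf_{\gamma\in\Sigma}\{\gamma_{ii}+2\lambda\gamma_{ii^{\ast}}+\lambda^{2}\gamma_{i^{\ast}i^{\ast}}\}>0$. The paper then finishes in one line with a cruder bound: it discards $\gamma_{ii}\geq0$, bounds $|\gamma_{ii^{\ast}}|\leq\alpha:=\sup_{\gamma\in\Sigma}|\gamma|$, and takes the explicit $\lambda=(2\alpha+1)(\underline{\sigma}_{i^{\ast}}^{2})^{-1}$, which yields the uniform lower bound $\lambda^{2}\underline{\sigma}_{i^{\ast}}^{2}-2\lambda\alpha=\lambda>0$ for every $i\neq i^{\ast}$ simultaneously. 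You instead exploit positive semidefiniteness of the $2\times2$ principal minor to get the sharper Cauchy--Schwarz estimate $|\gamma_{ii^{\ast}}|\leq\sqrt{\gamma_{ii}\gamma_{i^{\ast}i^{\ast}}}$, and then pass from nonnegativity to strict positivity via an equality-case analysis on the compact closure $\bar{\Sigma}$. This buys you a smaller admissible threshold ($\lambda>\bar{\sigma}_{i}/\underline{\sigma}_{i^{\ast}}$ rather than $(2\alpha+1)/\underline{\sigma}_{i^{\ast}}^{2}$), at the cost of invoking compactness that is not actually needed: your own estimate already gives the uniform quantitative bound
\[
\gamma_{ii}+2\lambda\gamma_{ii^{\ast}}+\lambda^{2}\gamma_{i^{\ast}i^{\ast}}\geq\bigl(\lambda\sqrt{\gamma_{i^{\ast}i^{\ast}}}-\sqrt{\gamma_{ii}}\bigr)^{2}\geq\bigl(\lambda\underline{\sigma}_{i^{\ast}}-\bar{\sigma}_{i}\bigr)^{2}>0
\]
for every $\gamma\in\Sigma$ once $\lambda\underline{\sigma}_{i^{\ast}}>\bar{\sigma}_{i}$, so you could skip the detour through $\bar{\Sigma}$ entirely. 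One further small point: make sure you fix a single $\lambda$ exceeding the maximum of $\bar{\sigma}_{i}/\underline{\sigma}_{i^{\ast}}$ over all $i\neq i^{\ast}$ (as you do), since the lemma asserts one $\lambda$ working for all coordinates at once.
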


\begin{proof}
By (\ref{e3-4}), $B^{i}+\lambda B^{i^{\ast}}$ is non-degenerate if and only if%
\[
-2G(-(e_{i}+\lambda e_{i^{\ast}})(e_{i}+\lambda e_{i^{\ast}})^{T}%
)=\inf_{\gamma \in \Sigma}\{ \gamma_{ii}+2\lambda \gamma_{ii^{\ast}}+\lambda
^{2}\gamma_{i^{\ast}i^{\ast}}\}>0.
\]
Since $\Sigma$ is bounded, we know $\alpha:=\sup_{\gamma \in \Sigma}%
|\gamma|<\infty$. Taking $\lambda=(2\alpha+1)(\underline{\sigma}_{i^{\ast}%
}^{2})^{-1}$ and noting that $\gamma_{ii}\geq0$, we obtain%
\[
\inf_{\gamma \in \Sigma}\{ \gamma_{ii}+2\lambda \gamma_{ii^{\ast}}+\lambda
^{2}\gamma_{i^{\ast}i^{\ast}}\} \geq \lambda^{2}\underline{\sigma}_{i^{\ast}%
}^{2}-2\lambda \alpha=\lambda>0.
\]
Thus $B^{i}+\lambda B^{i^{\ast}}$ is non-degenerate for each $i\not =i^{\ast}$.
\end{proof}

\begin{lemma}
\label{le12}Let Assumption (H3) hold and let $\xi \in L_{G}^{2}(\Omega_{s})$
with fixed $s\in \lbrack t,T]$. Then there exists a sequence $\xi^{k}%
=\sum_{j=1}^{N_{k}}\sum_{l=1}^{\bar{N}_{k}}x_{jl}^{k}I_{A_{j}^{k}}I_{\bar
{A}_{l}^{k}}$, $k\geq1$, such that%
\[
\lim_{k\rightarrow \infty}\mathbb{\hat{E}}\left[  |\xi-\xi^{k}|^{2}\right]
=0,
\]
where $x_{jl}^{k}\in \mathbb{R}$, $I_{A_{j}^{k}}\in L_{G}^{2}(\Omega_{t})$,
$I_{\bar{A}_{l}^{k}}\in L_{G}^{2}(\Omega_{s}^{t})$, $j\leq N_{k}$, $l\leq
\bar{N}_{k}$, $k\geq1$, $(A_{j}^{k})_{i=1}^{N_{k}}$ is a $\mathcal{B}%
(\Omega_{t})$-partition of $\Omega$, and $(\bar{A}_{l}^{k})_{l=1}^{\bar{N}%
_{k}}$ is a $\mathcal{B}(\Omega_{s}^{t})$-partition of $\Omega$.
\end{lemma}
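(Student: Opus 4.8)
The plan is to approximate $\xi \in L_G^2(\Omega_s)$ in two stages: first by a cylinder function built from increments over a fixed time grid straddling $t$, then by discretizing the range of each such cylinder function into simple functions of the required product form. Since $Lip(\Omega_s)$ is dense in $L_G^2(\Omega_s)$, I may assume $\xi = \varphi(B_{t_1},\ldots,B_{t_N})$ with $\varphi \in C_{b.Lip}(\mathbb{R}^{d\times N})$ and $t_1 < \cdots < t_N \le s$. Refining the partition if necessary, I may insert the time $t$ itself into the grid, so that the times split as $r_1 < \cdots < r_p \le t < r_{p+1} < \cdots < r_q \le s$, and rewrite $\xi$ in terms of the increments: those among $B_{r_1},\ldots,B_{r_p}$ (which lie in $Lip(\Omega_t)$) and those among $B_{r_{p+1}} - B_t, \ldots, B_{r_q} - B_t$ (which lie in $Lip(\Omega_s^t)$). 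Thus $\xi = \psi(Y, Z)$ where $Y$ is a $\mathbb{R}^{d\times p}$-valued random vector measurable w.r.t. $Lip(\Omega_t)$, $Z$ is a $\mathbb{R}^{d\times(q-p)}$-valued random vector measurable w.r.t.\ $Lip(\Omega_s^t)$, and $\psi$ is bounded Lipschitz.

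Next I would discretize. Partition $\mathbb{R}^{d\times p}$ into small cubes $(Q_j)_{j=1}^{N_k}$ of diameter $\le 1/k$ (intersected with a large ball, with the complement of the ball as one extra piece) and similarly partition $\mathbb{R}^{d\times(q-p)}$ into cubes $(\bar Q_l)_{l=1}^{\bar N_k}$ of diameter $\le 1/k$. Pick representative points $y_j \in Q_j$, $z_l \in \bar Q_l$, and set $x_{jl}^k := \psi(y_j, z_l)$, $A_j^k := \{Y \in Q_j\}$, $\bar A_l^k := \{Z \in \bar Q_l\}$. Then $A_j^k$ is a $\mathcal{B}(\Omega_t)$-measurable set (indeed $I_{A_j^k}$, being a bounded function of the $Lip(\Omega_t)$ random vector $Y$, lies in $L_G^2(\Omega_t)$ — one smooths the indicator of the cube, or simply invokes that indicators of such cylinder sets are in $L_G^2$), and likewise $\bar A_l^k$ is $\mathcal{B}(\Omega_s^t)$-measurable with $I_{\bar A_l^k} \in L_G^2(\Omega_s^t)$; the families are partitions of $\Omega$ for each fixed $k$. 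Defining $\xi^k := \sum_{j,l} x_{jl}^k I_{A_j^k} I_{\bar A_l^k}$, on the set $A_j^k \cap \bar A_l^k$ we have $|\xi - \xi^k| = |\psi(Y,Z) - \psi(y_j,z_l)| \le L_\psi(|Y-y_j| + |Z-z_l|) \le 2 L_\psi/k$ off the "large ball" exceptional pieces, and boundedness of $\psi$ plus the tail estimate $\mathbb{\hat E}[|Y|^2 + |Z|^2] < \infty$ (finite by Theorem on moments / Theorem~\ref{th-ex}-type bounds, or directly since $Lip$ random variables have all moments) controls the exceptional contribution. Hence $\mathbb{\hat E}[|\xi - \xi^k|^2] \to 0$.

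The main technical point — and the one I would be most careful about — is the measurability/integrability bookkeeping: verifying that $I_{A_j^k} \in L_G^2(\Omega_t)$ and $I_{\bar A_l^k} \in L_G^2(\Omega_s^t)$, i.e.\ that the indicator of a cube-preimage under a $Lip$-vector genuinely lies in the completed spaces. This is where the non-degeneracy assumption (H3) does \emph{not} actually get used for this lemma — one does not need $I_{[c,c']}(B_t^i) \in L_G^2$; rather, one can replace the sharp indicator cube by a continuous (Lipschitz) bump that equals $1$ on $Q_j$ and vanishes outside a $1/k$-neighborhood, carry out the same estimate with these smooth partitions of unity (which manifestly lie in $Lip(\Omega_t)$, resp.\ $Lip(\Omega_s^t)$), and only at the end note that the statement is phrased with honest indicators because it is the product structure $x_{jl}^k I_{A_j^k} I_{\bar A_l^k}$ that matters downstream. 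So in the write-up I would first do the smooth version cleanly, then remark that replacing bumps by indicators changes $\xi^k$ on a set of arbitrarily small capacity and hence does not affect the $L_G^2$-limit. The remaining steps — density of $Lip(\Omega_s)$, refining the grid to include $t$, the telescoping of increments, and the Lipschitz estimate — are routine.
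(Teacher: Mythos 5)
Your reduction to a cylinder function, the insertion of $t$ into the time grid, and the cube discretization are all fine, but the step you yourself flag as ``the main technical point'' is precisely where the argument breaks, and your claim that (H3) is not needed for this lemma is wrong. Under degenerate $G$, indicators of cylinder sets are in general \emph{not} in the completed space: by Theorem 4.1 of \cite{LL} (quoted in Section 3 of the paper), if $B^{i}$ is degenerate then $I_{[c,c')}(B_{t}^{i})\notin L_{G}^{2}(\Omega_{t})$ for any $c<c'$ and $t>0$. So you cannot ``simply invoke that indicators of such cylinder sets are in $L_{G}^{2}$,'' and the smoothing fallback fails for the same reason: if replacing a Lipschitz bump by the sharp indicator only changed $\xi^{k}$ on a set of arbitrarily small capacity, then the indicator would be an $L_{G}^{2}$-limit of Lipschitz cylinder functions and hence would lie in $L_{G}^{2}(\Omega_{t})$, contradicting the cited theorem. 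Concretely, the capacity of a transition shell such as $\{c-\varepsilon\le B_{t}^{i}<c\}$ need not tend to $0$ as $\varepsilon\downarrow 0$ when $B^{i}$ is degenerate, because a single point (e.g.\ $\{B_{t}^{i}=0\}$) can carry capacity one. Since the lemma explicitly requires $I_{A_{j}^{k}}\in L_{G}^{2}(\Omega_{t})$ and $I_{\bar{A}_{l}^{k}}\in L_{G}^{2}(\Omega_{s}^{t})$ for honest partitions --- and it is exactly this product/partition structure that is used downstream in Lemma \ref{le14} and Theorem \ref{th15} to pull indicators out of conditional expectations --- the smooth version does not prove the statement.

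The missing idea is the ``implied partition,'' which is where (H3) enters. By Lemma \ref{le11} one chooses $\lambda>0$ so that $B^{i}+\lambda B^{i^{\ast}}$ is non-degenerate for every $i\neq i^{\ast}$, forms the vector $B^{\lambda}$ of these non-degenerate combinations together with $B^{i^{\ast}}$, and partitions according to the half-open boxes $\{B_{t}^{\lambda}\in[c,c')\}$ and $\{B_{s}^{\lambda}-B_{t}^{\lambda}\in[c,c')\}$; Theorem 3.20 of \cite{HWZ} then guarantees that these indicators genuinely belong to $L_{G}^{2}(\Omega_{t})$, resp.\ $L_{G}^{2}(\Omega_{s}^{t})$. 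Because the linear map $B\mapsto B^{\lambda}$ is invertible, a fine partition in the $B^{\lambda}$ coordinates localizes $B$ itself to accuracy $C/k$ with $C$ depending only on $\lambda$ and $d$, and your Lipschitz estimate then goes through with representative points read off through the inverse map. Without this device your construction does not produce admissible sets $A_{j}^{k}$, $\bar{A}_{l}^{k}$, so the proof as proposed has a genuine gap.
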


\begin{proof}
Since $L_{G}^{2}(\Omega_{s})$ is the completion of $Lip(\Omega_{s})$ under the
norm $||\cdot||_{L_{G}^{2}}$, we only need to prove the case
\begin{equation}
\xi=\varphi(B_{t_{1}},B_{t_{2}}-B_{t_{1}},\ldots,B_{t_{N}}-B_{t_{N-1}}),
\label{e3-5}%
\end{equation}
where $N\geq1$, $0<t_{1}<\cdots<t_{N}\leq s$, $t_{i}=t$ for some $i\leq N$,
$\varphi \in C_{b.Lip}(\mathbb{R}^{d\times N})$. In the following, we only
prove the case
\[
\xi=\varphi(B_{t},B_{s}-B_{t})
\]
for simplicity. The proof for (\ref{e3-5}) is similar.

By Lemma \ref{le11}, there exists a constant $\lambda>0$ such that
$B^{i}+\lambda B^{i^{\ast}}$ is non-degenerate for each $i\not =i^{\ast}$. Set%
\[
B_{r}^{\lambda}=(B_{r}^{1}+\lambda B_{r}^{i^{\ast}},\ldots,B_{r}^{i^{\ast}%
-1}+\lambda B_{r}^{i^{\ast}},B_{r}^{i^{\ast}},B_{r}^{i^{\ast}+1}+\lambda
B_{r}^{i^{\ast}},\ldots,B_{r}^{d}+\lambda B_{r}^{i^{\ast}})^{T}.
\]
It follows from Theorem 3.20 in \cite{HWZ} that%
\begin{equation}
I_{\{B_{t}^{\lambda}\in \lbrack c,c^{\prime})\}}\in L_{G}^{2}(\Omega_{t})\text{
and }I_{\{B_{s}^{\lambda}-B_{t}^{\lambda}\in \lbrack c,c^{\prime})\}}\in
L_{G}^{2}(\Omega_{s}^{t}) \label{e3-6}%
\end{equation}
for any $c=(c_{1},\ldots,c_{d})^{T}$, $\bar{c}=(\bar{c}_{1},\ldots,\bar{c}%
_{d})^{T}\in \mathbb{R}^{d}$ with $c<c^{\prime}$. For each $k\geq1$, it is easy
to find a finite number of disjoint intervals $[c^{j,k},\bar{c}^{j,k})$,
$j=1$,$\ldots$,$N_{k}-1$, such that $|\bar{c}^{j,k}-c^{j,k}|<k^{-1}$ and
$[-ke,ke)=\cup_{j\leq N_{k}-1}[c^{j,k},\bar{c}^{j,k})$ with $e=[1,\ldots
,1]^{T}\in \mathbb{R}^{d}$. Define%
\[
A_{j}^{k}=\{B_{t}^{\lambda}\in \lbrack c^{j,k},\bar{c}^{j,k})\} \text{, }%
\bar{A}_{j}^{k}=\{B_{s}^{\lambda}-B_{t}^{\lambda}\in \lbrack c^{j,k},\bar
{c}^{j,k})\} \text{ for }j\leq N_{k}-1\text{,}%
\]
and $A_{N_{k}}^{k}=\Omega \backslash \cup_{j\leq N_{k}-1}A_{j}^{k}$, $\bar
{A}_{N_{k}}^{k}=\Omega \backslash \cup_{j\leq N_{k}-1}\bar{A}_{j}^{k}$. It is
easy to verify that $(A_{j}^{k})_{i=1}^{N_{k}}$ is a $\mathcal{B}(\Omega_{t}%
)$-partition of $\Omega$, and $(\bar{A}_{j}^{k})_{j=1}^{N_{k}}$ is a
$\mathcal{B}(\Omega_{s}^{t})$-partition of $\Omega$. By (\ref{e3-5}), we know
that $I_{A_{j}^{k}}\in L_{G}^{2}(\Omega_{t})$ and $I_{\bar{A}_{j}^{k}}\in
L_{G}^{2}(\Omega_{s}^{t})$ for $j\leq N_{k}$. Set%
\begin{equation}
\xi^{k}=\sum_{j=1}^{N_{k}}\sum_{l=1}^{N_{k}}\varphi(\tilde{c}^{j,k},\tilde
{c}^{l,k})I_{A_{j}^{k}}I_{\bar{A}_{l}^{k}}, \label{e3-7}%
\end{equation}
where $\tilde{c}^{j,k}=(c_{1}^{j,k}-\lambda c_{i^{\ast}}^{j,k},\ldots
,c_{i^{\ast}-1}^{j,k}-\lambda c_{i^{\ast}}^{j,k},c_{i^{\ast}}^{j,k}%
,c_{i^{\ast}+1}^{j,k}-\lambda c_{i^{\ast}}^{j,k},\ldots,c_{d}^{j,k}-\lambda
c_{i^{\ast}}^{j,k})^{T}$ for $j\leq N_{k}-1$ and $\tilde{c}^{N_{k},k}=0$. For
$j$, $l\leq N_{k}-1$, one can check that
\begin{align*}
|\xi-\xi^{k}|I_{A_{j}^{k}}I_{\bar{A}_{l}^{k}}  &  \leq L_{\varphi}%
(|B_{t}-\tilde{c}^{j,k}|+|B_{s}-B_{t}-\tilde{c}^{l,k}|)I_{A_{j}^{k}}I_{\bar
{A}_{l}^{k}}\\
&  \leq CL_{\varphi}(|B_{t}^{\lambda}-c^{j,k}|+|B_{s}^{\lambda}-B_{t}%
^{\lambda}-c^{l,k}|)I_{A_{j}^{k}}I_{\bar{A}_{l}^{k}},
\end{align*}
where $L_{\varphi}$ is the Lipschitz constant of $\varphi$, the constant $C>0$
depends on $\lambda$ and $d$. Thus, we obtain%
\begin{align*}
|\xi-\xi^{k}|  &  \leq CL_{\varphi}\frac{2}{k}+2M_{\varphi}(I_{A_{N_{k}}^{k}%
}+I_{\bar{A}_{N_{k}}^{k}})\\
&  \leq CL_{\varphi}\frac{2}{k}+\frac{2M_{\varphi}}{k}(|B_{t}^{\lambda
}|+|B_{s}^{\lambda}-B_{t}^{\lambda}|),
\end{align*}
where $M_{\varphi}$ is the bound of $\varphi$. From this, we can get
$\mathbb{\hat{E}}\left[  |\xi-\xi^{k}|^{2}\right]  \leq Ck^{-2}$, which
implies the desired result.
\end{proof}

\begin{remark}
In the above proof, the partition of $B^{i}$ is deduced by the partition of
$B^{i}+\lambda B^{i^{\ast}}$ and $B^{i^{\ast}}$. So, the random variable
$\xi^{k}$ defined in (\ref{e3-7}) is called an implied partition of $\xi$.
\end{remark}

\begin{lemma}
\label{le14}Let Assumption (H3) hold and let $u\in \mathcal{U}[t,T]$ be given.
Then there exists a sequence $(u^{k})_{k\geq1}$ in $\mathbb{U}[t,T]$ such that%
\[
\lim_{k\rightarrow \infty}\mathbb{\hat{E}}\left[  \int_{t}^{T}|u_{s}-u_{s}%
^{k}|^{2}ds\right]  =0.
\]

\end{lemma}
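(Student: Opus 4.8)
The plan is to approximate a general admissible control $u\in\mathcal{U}[t,T]$ in three stages, each handled by a tool already available. First I would reduce to the case where $u$ is a simple process of the form $u_{s}=\sum_{k=0}^{N-1}\eta_{k}I_{[t_{k},t_{k+1})}(s)$ with $\eta_{k}\in L_{G}^{2}(\Omega_{t_{k}};\mathbb{R}^{m})$ taking values in (a slightly enlarged neighbourhood of) $U$: this is just the definition of $M_{G}^{2}(t,T;\mathbb{R}^{m})$ as the completion of $M_{G}^{0}(t,T)$, and since $U$ is compact one can truncate and project back onto $U$ without increasing the $M_{G}^{2}$-distance by more than a controlled amount. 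So it suffices to approximate each fixed coefficient $\eta_{k}\in L_{G}^{2}(\Omega_{t_{k}};\mathbb{R}^{m})$, component by component, by a random variable of the ``implied partition'' type produced in Lemma~\ref{le12}.

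The second and main stage is to apply Lemma~\ref{le12} to each $\eta_{k}$. Lemma~\ref{le12} (applied with $s=t_{k}$, noting $t\le t_{k}\le T$) gives, for each $\varepsilon>0$, a random variable $\eta_{k}^{\varepsilon}=\sum_{j}\sum_{l}x_{jl}I_{A_{j}}I_{\bar A_{l}}$ with $A_{j}\in\mathcal{B}(\Omega_{t})$, $I_{A_j}\in L_G^2(\Omega_t)$, and $\bar A_{l}\in\mathcal{B}(\Omega_{t_k}^{t})$, $I_{\bar A_l}\in L_G^2(\Omega_{t_k}^t)$, with $\mathbb{\hat E}[|\eta_k-\eta_k^{\varepsilon}|^2]<\varepsilon$. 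Grouping the $\Omega_t$-measurable indicators: on each atom $A_{j}$ of the $\Omega_t$-partition, the remaining piece $\sum_{l}x_{jl}I_{\bar A_{l}}I_{[t_k,t_{k+1})}(s)$ is a simple $M_G^{0,t}(t,T)$-type process built from increments after time $t$, hence (after doing this simultaneously for all $k$ and refining the partition $(A_j)$ so it does not depend on $k$) the whole process $u^{\varepsilon}$ can be written as $u^{\varepsilon}=\sum_{j}I_{A_{j}}\,v^{j}$ with each $v^{j}\in\mathcal{U}^{t}[t,T]$ and $(A_{j})_{j}$ a $\mathcal{B}(\Omega_t)$-partition of $\Omega$ — which is exactly the form defining $\mathbb{U}[t,T]$. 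One then checks $v^j$ takes values in $U$ by the truncation/projection done in stage one. The resulting $L^2$-estimate is $\mathbb{\hat E}[\int_t^T|u_s-u^{\varepsilon}_s|^2ds]\le \sum_{k}(t_{k+1}-t_k)\,\mathbb{\hat E}[|\eta_k-\eta_k^{\varepsilon}|^2]\le T\varepsilon$, using sublinearity of $\mathbb{\hat E}$ to split the integral over the partition.

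The third stage is purely bookkeeping: take $\varepsilon=1/k$, obtain $u^{k}\in\mathbb{U}[t,T]$ with $\mathbb{\hat E}[\int_t^T|u_s-u^k_s|^2ds]\le T/k\to0$, and relabel. I would expect the main obstacle to be the measurability/partition compatibility in stage two: one must verify that the indicators $I_{A_j}$ produced by Lemma~\ref{le12} for different time points $t_k$ can be taken with respect to a common $\mathcal{B}(\Omega_t)$-partition (take the common refinement, which stays in $L_G^2(\Omega_t)$ since finite intersections and unions of such sets do), and that the leftover factor genuinely lands in $\mathcal{U}^t[t,T]=M_G^{2,t}(t,T;\mathbb{R}^m)$ with values in $U$ — the value constraint is the delicate point, resolved by noting $\eta_k^{\varepsilon}$ can be forced into $U$ by replacing each constant $x_{jl}$ by its nearest point in $U$, which only decreases $|\eta_k-\eta_k^{\varepsilon}|$ since $U$ is closed and $\eta_k$ is $U$-valued. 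The bound from Theorem~\ref{th-ex} is not needed here but will be the reason this approximation is useful downstream. $\Box$
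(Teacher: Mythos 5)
Your proposal follows essentially the same route as the paper's proof: first approximate $u$ by a simple process $\sum_i \xi_i^k I_{[t_i^k,t_{i+1}^k)}(s)$, then replace each coefficient by an implied-partition random variable via Lemma~\ref{le12} (built over a common $\mathcal{B}(\Omega_t)$-partition, so the result lands in $\mathbb{U}[t,T]$), and finally project the constants onto the compact set $U$. The only imprecision is your claim that projecting each $x_{jl}$ onto $U$ ``only decreases'' the error: it need not decrease it, but since $u_s\in U$ one has $|\tilde u^k_s-u^k_s|\le|\tilde u^k_s-u_s|$, so the triangle inequality gives at most a doubling, which is exactly the factor-of-four estimate $\mathbb{\hat E}[\int_t^T|u_s-u_s^k|^2ds]\le 4\,\mathbb{\hat E}[\int_t^T|u_s-\tilde u_s^k|^2ds]$ used in the paper.
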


\begin{proof}
Since $u\in M_{G}^{2}(t,T;\mathbb{R}^{m})$, there exists a sequence
\[
\bar{u}_{s}^{k}=\sum_{i=0}^{N-1}\xi_{i}^{k}I_{[t_{i}^{k},t_{i+1}^{k}%
)}(s)\text{, }t=t_{0}^{k}<\cdots<t_{N}^{k}=T\text{, }\xi_{i}^{k}\in
Lip(\Omega_{t_{i}^{k}};\mathbb{R}^{m})\text{, }k\geq1\text{,}%
\]
such that $\mathbb{\hat{E}}\left[  \int_{t}^{T}|u_{s}-\bar{u}_{s}^{k}%
|^{2}ds\right]  \rightarrow0$ as $k\rightarrow \infty$. Note that $I_{AB}%
=I_{A}I_{B}\in L_{G}^{2}(\Omega_{t})$ if $I_{A}$, $I_{B}\in L_{G}^{2}%
(\Omega_{t})$, then, by Lemma \ref{le12}, we can find%
\[
\bar{\xi}_{i}^{k}=\sum_{j=1}^{N_{k}}\sum_{l=1}^{\bar{N}_{i,k}}x_{jl}%
^{i,k}I_{A_{j}^{k}}I_{\bar{A}_{l}^{i,k}}\text{ for }i=0,\ldots,N-1,
\]
such that $x_{jl}^{i,k}\in \mathbb{R}^{m}$, $I_{A_{j}^{k}}\in L_{G}^{2}%
(\Omega_{t})$, $I_{\bar{A}_{l}^{i,k}}\in L_{G}^{2}(\Omega_{t_{i}}^{t})$,
$j\leq N_{k}$, $l\leq \bar{N}_{i,k}$, $i\leq N-1$, $(A_{j}^{k})_{i=1}^{N_{k}}$
is a $\mathcal{B}(\Omega_{t})$-partition of $\Omega$, $(\bar{A}_{l}%
^{i,k})_{l=1}^{\bar{N}_{i,k}}$ is a $\mathcal{B}(\Omega_{t_{i}}^{t}%
)$-partition of $\Omega$ and%
\[
\mathbb{\hat{E}}\left[  |\xi_{i}^{k}-\bar{\xi}_{i}^{k}|^{2}\right]  <\frac
{1}{k}\text{ for }i=0,\ldots,N-1.
\]
Set $\tilde{u}_{s}^{k}=\sum_{i=0}^{N-1}\bar{\xi}_{i}^{k}I_{[t_{i}^{k}%
,t_{i+1}^{k})}(s)$. Then we have%
\begin{align*}
\mathbb{\hat{E}}\left[  \int_{t}^{T}|u_{s}-\tilde{u}_{s}^{k}|^{2}ds\right]
&  \leq2\mathbb{\hat{E}}\left[  \int_{t}^{T}|u_{s}-\bar{u}_{s}^{k}%
|^{2}ds\right]  +2\left[  \int_{t}^{T}|\bar{u}_{s}^{k}-\tilde{u}_{s}^{k}%
|^{2}ds\right] \\
&  \leq2\mathbb{\hat{E}}\left[  \int_{t}^{T}|u_{s}-\bar{u}_{s}^{k}%
|^{2}ds\right]  +2\sum_{i=0}^{N-1}\mathbb{\hat{E}}\left[  |\xi_{i}^{k}%
-\bar{\xi}_{i}^{k}|^{2}\right]  (t_{i+1}^{k}-t_{i}^{k})\\
&  \leq2\mathbb{\hat{E}}\left[  \int_{t}^{T}|u_{s}-\bar{u}_{s}^{k}%
|^{2}ds\right]  +\frac{2(T-t)}{k}\rightarrow0
\end{align*}
as $k\rightarrow \infty$. Since $U$ is a nonempty compact set of $\mathbb{R}%
^{m}$, for each $x_{jl}^{i,k}$, there exists a $\tilde{x}_{jl}^{i,k}$ such
that%
\[
|x_{jl}^{i,k}-\tilde{x}_{jl}^{i,k}|=\inf \{|x_{jl}^{i,k}-x|:x\in U\}.
\]
Set%
\[
\tilde{\xi}_{i}^{k}=\sum_{j=1}^{N_{k}}\sum_{l=1}^{\bar{N}_{i,k}}\tilde{x}%
_{jl}^{i,k}I_{A_{j}^{k}}I_{\bar{A}_{l}^{i,k}}\text{ for }i=0,\ldots,N-1,
\]
and%
\[
u_{s}^{k}=\sum_{i=0}^{N-1}\tilde{\xi}_{i}^{k}I_{[t_{i}^{k},t_{i+1}^{k})}(s).
\]
It is easy to check that $u^{k}\in \mathbb{U}[t,T]$. Since $u_{s}\in U$, we
know $|\tilde{u}_{s}^{k}-u_{s}^{k}|\leq|\tilde{u}_{s}^{k}-u_{s}|$. Thus%
\begin{align*}
\mathbb{\hat{E}}\left[  \int_{t}^{T}|u_{s}-u_{s}^{k}|^{2}ds\right]   &
\leq2\mathbb{\hat{E}}\left[  \int_{t}^{T}|u_{s}-\tilde{u}_{s}^{k}%
|^{2}ds\right]  +2\mathbb{\hat{E}}\left[  \int_{t}^{T}|\tilde{u}_{s}^{k}%
-u_{s}^{k}|^{2}ds\right] \\
&  \leq4\mathbb{\hat{E}}\left[  \int_{t}^{T}|u_{s}-\tilde{u}_{s}^{k}%
|^{2}ds\right]  \rightarrow0
\end{align*}
as $k\rightarrow \infty$, which implies the desired result.
\end{proof}

\begin{theorem}
\label{th15}Let Assumptions (H1)-(H3) hold. Then the value function $V(t,x)$
exists for each $(t,x)\in \lbrack0,T]\times \mathbb{R}^{n}$ and%
\begin{equation}
V(t,x):=\underset{v\in \mathcal{U}^{t}[t,T]}{\inf}J(t,x,v). \label{e3-8}%
\end{equation}

\end{theorem}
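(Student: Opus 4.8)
The plan is to show that the deterministic quantity $\bar{V}(t,x):=\inf_{v\in\mathcal{U}^{t}[t,T]}J(t,x,v)$ is precisely the essential infimum in Definition \ref{def-3-1}, so that $V(t,x)$ exists and coincides with it. The first ingredient is that for every $v\in\mathcal{U}^{t}[t,T]$ the cost $J(t,x,v)$ is a deterministic real number: since $v\in M_{G}^{2,t}(t,T;\mathbb{R}^{m})$ depends only on the increments of $B$ after time $t$ and the initial value $x$ is constant, the solution $X^{t,x,v}$ of (\ref{e3-1}) lies componentwise in $L_{G}^{2}(\Omega_{s}^{t};\mathbb{R}^{n})$ for $s\in[t,T]$, and hence $\eta^{v}:=\Phi(X_{T}^{t,x,v})+\int_{t}^{T}f(s,X_{s}^{t,x,v},v_{s})\,ds+\int_{t}^{T}g_{ij}(s,X_{s}^{t,x,v},v_{s})\,d\langle B^{i},B^{j}\rangle_{s}\in L_{G}^{1}(\Omega_{T}^{t})$. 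By independence of the increments of $G$-Brownian motion, $\mathbb{\hat{E}}_{t}[Z]=\mathbb{\hat{E}}[Z]$ for every $Z\in L_{G}^{1}(\Omega_{T}^{t})$, so $J(t,x,v)=\mathbb{\hat{E}}[\eta^{v}]\in\mathbb{R}$. Using (H1) together with the moment bound $\mathbb{\hat{E}}_{t}[|X_{s}^{t,x,v}|^{2}]\le C(1+|x|^{2})$ of Theorem \ref{th-ex}, one also gets $|J(t,x,v)|\le C(1+|x|^{2})$ uniformly in $v$, so $\bar{V}(t,x)$ is finite and in particular $\bar{V}(t,x)\in L_{G}^{2}(\Omega_{t})$.

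Next I would verify property (i) of Definition \ref{def-3-1}, i.e.\ $\bar{V}(t,x)\le J(t,x,u)$ q.s.\ for every $u\in\mathcal{U}[t,T]$, first for $u\in\mathbb{U}[t,T]$. Write $u=\sum_{k=1}^{N}I_{A_{k}}u^{k}$ with $u^{k}\in\mathcal{U}^{t}[t,T]$, $I_{A_{k}}\in L_{G}^{2}(\Omega_{t})$ and $(A_{k})_{k=1}^{N}$ a partition of $\Omega$. The process $Y_{s}:=\sum_{k}I_{A_{k}}X_{s}^{t,x,u^{k}}$ satisfies $Y_{t}=x$, and since $(A_{k})$ is a partition and the coefficients $b,h_{ij},\sigma$ depend only on the present state and control, $Y$ solves (\ref{e3-1}) with control $u$; by uniqueness of solutions, $X^{t,x,u}=\sum_{k}I_{A_{k}}X^{t,x,u^{k}}$ q.s., which likewise gives $\eta^{u}=\sum_{k}I_{A_{k}}\eta^{u^{k}}$. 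Combining this with the identity $\mathbb{\hat{E}}_{t}\big[\sum_{k}I_{A_{k}}\zeta_{k}\big]=\sum_{k}I_{A_{k}}\mathbb{\hat{E}}_{t}[\zeta_{k}]$ (valid for $I_{A_{k}}\in L_{G}^{2}(\Omega_{t})$, $\zeta_{k}\in L_{G}^{1}(\Omega_{T})$ and $(A_{k})$ a partition, obtained by multiplying by $I_{A_{j}}$ and using $\mathbb{\hat{E}}_{t}[I_{A_{j}}\zeta]=I_{A_{j}}\mathbb{\hat{E}}_{t}[\zeta]$), we get $J(t,x,u)=\sum_{k}I_{A_{k}}J(t,x,u^{k})\ge\sum_{k}I_{A_{k}}\bar{V}(t,x)=\bar{V}(t,x)$ q.s.

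For a general $u\in\mathcal{U}[t,T]$ I would invoke Lemma \ref{le14} to pick $u^{k}\in\mathbb{U}[t,T]$ with $\mathbb{\hat{E}}[\int_{t}^{T}|u_{s}-u_{s}^{k}|^{2}\,ds]\to0$, and then estimate $\mathbb{\hat{E}}[|J(t,x,u)-J(t,x,u^{k})|]\le\mathbb{\hat{E}}[\mathbb{\hat{E}}_{t}[|\eta^{u}-\eta^{u^{k}}|]]=\mathbb{\hat{E}}[|\eta^{u}-\eta^{u^{k}}|]$ by sublinearity and the tower property. Bounding $|\eta^{u}-\eta^{u^{k}}|$ via the Lipschitz estimates (H1), dominating the finite-variation terms $d\langle B^{i},B^{j}\rangle_{s}$ in total variation by $C\,ds$, and applying the Cauchy--Schwarz inequality for $\mathbb{\hat{E}}$ together with the bounds $\mathbb{\hat{E}}_{t}[|X_{s}^{t,x,u}|^{2}]\le C(1+|x|^{2})$ and $\mathbb{\hat{E}}_{t}[|X_{s}^{t,x,u}-X_{s}^{t,x,u^{k}}|^{2}]\le C\,\mathbb{\hat{E}}_{t}[\int_{t}^{T}|u_{r}-u_{r}^{k}|^{2}\,dr]$ of Theorem \ref{th-ex} (with $\xi=\xi'=x$), one obtains $\mathbb{\hat{E}}[|J(t,x,u)-J(t,x,u^{k})|]\le C(1+|x|)\big(\mathbb{\hat{E}}[\int_{t}^{T}|u_{s}-u_{s}^{k}|^{2}\,ds]\big)^{1/2}\to0$. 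Hence $J(t,x,u^{k})\to J(t,x,u)$ in $L_{G}^{1}(\Omega_{t})$, so along a subsequence q.s.; since each $J(t,x,u^{k})\ge\bar{V}(t,x)$ q.s.\ by the previous step, passing to the limit gives $J(t,x,u)\ge\bar{V}(t,x)$ q.s., which is property (i).

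Finally, property (ii) is immediate: if $\eta\le J(t,x,u)$ q.s.\ for all $u\in\mathcal{U}[t,T]$, then in particular $\eta\le J(t,x,v)$ q.s.\ for every $v\in\mathcal{U}^{t}[t,T]\subset\mathcal{U}[t,T]$; choosing $v_{n}$ with $J(t,x,v_{n})\to\bar{V}(t,x)$ and taking the union of the corresponding polar sets gives $\eta\le\inf_{n}J(t,x,v_{n})=\bar{V}(t,x)$ q.s. Thus $\bar{V}(t,x)$ satisfies (i) and (ii), so the value function $V(t,x)$, being the essential infimum of $\{J(t,x,u):u\in\mathcal{U}[t,T]\}$, exists, is deterministic, and equals $\inf_{v\in\mathcal{U}^{t}[t,T]}J(t,x,v)$. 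The step I expect to be the main obstacle is the stability estimate of the third paragraph: making $\mathbb{\hat{E}}[|J(t,x,u)-J(t,x,u^{k})|]\to0$ rigorous requires carefully combining all parts of (H1), the $G$-It\^{o} moment and continuity bounds of Theorem \ref{th-ex}, and the control of the terms $d\langle B^{i},B^{j}\rangle_{s}$; the determinism claim in the first paragraph also rests on the (standard but crucial) fact that $\mathbb{\hat{E}}_{t}$ acts as $\mathbb{\hat{E}}$ on $L_{G}^{1}(\Omega_{T}^{t})$.
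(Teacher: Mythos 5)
Your proposal is correct and follows essentially the same route as the paper: show $J(t,x,v)$ is deterministic for $v\in\mathcal{U}^{t}[t,T]$, use the decomposition $X^{t,x,u}=\sum_{k}I_{A_{k}}X^{t,x,u^{k}}$ for controls in $\mathbb{U}[t,T]$ to get $J(t,x,u)\geq\inf_{v}J(t,x,v)$, and then pass to general $u$ via Lemma \ref{le14} and the stability estimate (\ref{e3-11}). The only cosmetic difference is the final limit step (you extract a q.s.\ convergent subsequence, the paper works with $\mathbb{\hat{E}}[(J(t,x,u^{k})-\inf_{v}J)^{-}]=0$), which is an equivalent argument.
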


\begin{proof}
For each $v\in \mathcal{U}^{t}[t,T]$, it is easy to deduce $X_{s}^{t,x,v}\in
L_{G}^{p}(\Omega_{s}^{t})$ for any $p\geq2$, which implies that $J(t,x,v)$ is
a constant. Since $\mathcal{U}^{t}[t,T]\subset \mathcal{U}[t,T]$, by Definition
\ref{def-3-1}, we only need to prove that, for any fixed $u\in \mathcal{U}%
[t,T]$,%
\begin{equation}
J(t,x,u)\geq \underset{v\in \mathcal{U}^{t}[t,T]}{\inf}J(t,x,v),\text{ q.s.}
\label{e3-9}%
\end{equation}
By Lemma \ref{le14}, there exists a sequence $u_{s}^{k}=\sum_{j=1}^{N_{k}%
}I_{A_{j}^{k}}v_{s}^{j,k}$, $k\geq1$, such that $I_{A_{j}^{k}}\in L_{G}%
^{2}(\Omega_{t})$, $v^{j,k}\in \mathcal{U}^{t}[t,T]$, $(A_{j}^{k})_{j=1}%
^{N_{k}}$ is a partition of $\Omega$ and
\begin{equation}
\mathbb{\hat{E}}\left[  \int_{t}^{T}|u_{s}^{k}-u_{s}|^{2}ds\right]
\rightarrow0\text{ as }k\rightarrow \infty \text{.} \label{e3-10}%
\end{equation}
It is easy to check that $X_{s}^{t,x,u^{k}}=\sum_{j=1}^{N_{k}}I_{A_{j}^{k}%
}X_{s}^{t,x,v^{j,k}}$ for $s\in \lbrack t,T]$. Thus%
\begin{align*}
J(t,x,u^{k})  &  =\mathbb{\hat{E}}_{t}\left[  \Phi(X_{T}^{t,x,u^{k}})+\int
_{t}^{T}f(s,X_{s}^{t,x,u^{k}},u_{s}^{k})ds+\int_{t}^{T}g_{ij}(s,X_{s}%
^{t,x,u^{k}},u_{s}^{k})d\langle B^{i},B^{j}\rangle_{s}\right] \\
&  =\sum_{j=1}^{N_{k}}I_{A_{j}^{k}}\mathbb{\hat{E}}_{t}\left[  \Phi
(X_{T}^{t,x,v^{j,k}})+\int_{t}^{T}f(s,X_{s}^{t,x,v^{j,k}},v_{s}^{j,k}%
)ds+\int_{t}^{T}g_{ij}(s,X_{s}^{t,x,v^{j,k}},v_{s}^{j,k})d\langle B^{i}%
,B^{j}\rangle_{s}\right] \\
&  =\sum_{j=1}^{N_{k}}I_{A_{j}^{k}}J(t,x,v^{j,k})\geq \underset{v\in
\mathcal{U}^{t}[t,T]}{\inf}J(t,x,v).
\end{align*}
It follows from (H1) and H\"{o}lder's inequality that%
\begin{align*}
&  \mathbb{\hat{E}}\left[  |J(t,x,u^{k})-J(t,x,u)|\right] \\
&  \leq C\left \{  \left(  1+\left(  \sup_{s\in \lbrack t,T]}\mathbb{\hat{E}%
}\left[  |X_{s}^{t,x,u^{k}}|^{2}+|X_{s}^{t,x,u}|^{2}\right]  \right)
^{1/2}\right)  \left(  \sup_{s\in \lbrack t,T]}\mathbb{\hat{E}}\left[
|X_{s}^{t,x,u^{k}}-X_{s}^{t,x,u}|^{2}\right]  \right)  ^{1/2}\right. \\
&  \left.  \  \  \  \ +\left(  \mathbb{\hat{E}}\left[  \int_{t}^{T}|u_{s}%
^{k}-u_{s}|^{2}ds\right]  \right)  ^{1/2}\right \}  ,
\end{align*}
where $C>0$ depends on $T$, $\bar{\sigma}^{2}$ and $L$. By Theorem \ref{th-ex}
and the above inequality, we obtain%
\begin{equation}
\mathbb{\hat{E}}\left[  |J(t,x,u^{k})-J(t,x,u)|\right]  \leq C(1+|x|)\left(
\mathbb{\hat{E}}\left[  \int_{t}^{T}|u_{s}^{k}-u_{s}|^{2}ds\right]  \right)
^{1/2}, \label{e3-11}%
\end{equation}
where $C>0$ depends on $T$, $\bar{\sigma}^{2}$ and $L$. Combining
(\ref{e3-10}) and (\ref{e3-11}), we get
\begin{equation}
\mathbb{\hat{E}}\left[  |J(t,x,u^{k})-J(t,x,u)|\right]  \rightarrow0\text{ as
}k\rightarrow \infty. \label{e3-12}%
\end{equation}
Since $J(t,x,u^{k})\geq \inf_{v\in \mathcal{U}^{t}[t,T]}J(t,x,v)$, we have%
\begin{equation}
\mathbb{\hat{E}}\left[  \left(  J(t,x,u^{k})-\inf_{v\in \mathcal{U}^{t}%
[t,T]}J(t,x,v)\right)  ^{-}\right]  =0. \label{e3-13}%
\end{equation}
By (\ref{e3-12}) and (\ref{e3-13}), we obtain%
\[
\mathbb{\hat{E}}\left[  \left(  J(t,x,u)-\inf_{v\in \mathcal{U}^{t}%
[t,T]}J(t,x,v)\right)  ^{-}\right]  =0,
\]
which implies $J(t,x,u)\geq \inf_{v\in \mathcal{U}^{t}[t,T]}J(t,x,v)$, q.s. Thus
we obtain (\ref{e3-8}).
\end{proof}

Now we use (\ref{e3-8}) to study the properties of $V(\cdot,\cdot)$ in $x$.

\begin{proposition}
\label{pr16}Let Assumptions (H1)-(H3) hold. Then there exists a constant $C>0$
depending on $T$, $\bar{\sigma}^{2}$ and $L$ such that%
\[
|V(t,x)-V(t,x^{\prime})|\leq C(1+|x|+|x^{\prime}|)|x-x^{\prime}|\text{ and
}|V(t,x)|\leq C(1+|x|^{2})
\]
for $t\in \lbrack0,T]$, $x$, $x^{\prime}\in \mathbb{R}^{n}$.
\end{proposition}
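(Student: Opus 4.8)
The plan is to use the characterization (\ref{e3-8}), namely $V(t,x)=\inf_{v\in\mathcal{U}^{t}[t,T]}J(t,x,v)$, which reduces everything to deterministic quantities and lets us estimate $|V(t,x)-V(t,x^{\prime})|$ by taking the same control $v\in\mathcal{U}^{t}[t,T]$ on both sides. First I would fix $v\in\mathcal{U}^{t}[t,T]$ and compare $J(t,x,v)$ with $J(t,x^{\prime},v)$ directly from the definition (\ref{e3-2}), splitting the difference into the terminal term $\Phi(X_{T}^{t,x,v})-\Phi(X_{T}^{t,x^{\prime},v})$, the running term $\int_{t}^{T}(f(s,X_{s}^{t,x,v},v_{s})-f(s,X_{s}^{t,x^{\prime},v},v_{s}))\,ds$, and the $d\langle B^{i},B^{j}\rangle$ term, whose integrand differs by $g_{ij}(s,X_{s}^{t,x,v},v_{s})-g_{ij}(s,X_{s}^{t,x^{\prime},v},v_{s})$. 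Using (H1) these integrands are bounded pointwise by $L(1+|X_{s}^{t,x,v}|+|X_{s}^{t,x^{\prime},v}|)|X_{s}^{t,x,v}-X_{s}^{t,x^{\prime},v}|$, and since $\langle B^{i},B^{j}\rangle$ has bounded-variation density controlled by $\bar\sigma^{2}$, all three terms are dominated by a constant (depending on $T$, $\bar\sigma^{2}$, $L$) times $\sup_{s\in[t,T]}(1+|X_{s}^{t,x,v}|+|X_{s}^{t,x^{\prime},v}|)|X_{s}^{t,x,v}-X_{s}^{t,x^{\prime},v}|$ inside $\mathbb{\hat{E}}_{t}$.

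Next I would apply $\mathbb{\hat{E}}_{t}$, use sublinearity to split off the supremum, and invoke the Cauchy--Schwarz inequality for the sublinear expectation together with the moment estimates of Theorem \ref{th-ex}: the second estimate there (with $p=2$ applied to $X^{t,x,v}$ and $X^{t,x^{\prime},v}$, and with larger $p$ if needed to control the $\sup$) gives $\mathbb{\hat{E}}_{t}[\sup_{s}|X_{s}^{t,x,v}|^{2}]\le C(1+|x|^{2})$, and the first estimate gives $\mathbb{\hat{E}}_{t}[|X_{s}^{t,x,v}-X_{s}^{t,x^{\prime},v}|^{2}]\le C|x-x^{\prime}|^{2}$ (here the control term vanishes because $u=v$ on both sides). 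To handle the supremum rather than a fixed time inside the $L^{2}$ difference estimate, I would either note that the argument of Theorem \ref{th-ex} yields $\mathbb{\hat{E}}_{t}[\sup_{s\in[t,T]}|X_{s}^{t,x,v}-X_{s}^{t,x^{\prime},v}|^{2}]\le C|x-x^{\prime}|^{2}$ by the same Gronwall/BDG-type computation, or run the comparison at each fixed $s$ and integrate. Combining, $|J(t,x,v)-J(t,x^{\prime},v)|\le C(1+|x|+|x^{\prime}|)|x-x^{\prime}|$ with $C$ independent of $v$; taking the infimum over $v\in\mathcal{U}^{t}[t,T]$ on both sides and using $|\inf_{v}a_{v}-\inf_{v}b_{v}|\le\sup_{v}|a_{v}-b_{v}|$ gives the first inequality of the proposition.

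For the growth bound $|V(t,x)|\le C(1+|x|^{2})$, I would fix any $v_{0}\in\mathcal{U}^{t}[t,T]$ (e.g. a constant control with value in $U$, which is nonempty) and bound $|V(t,x)|\le|J(t,x,v_{0})|$ from above; from (\ref{e3-2}), (H1) applied with a fixed reference point (say $x_{0}=0$, $v_{0}$), $|\Phi(y)|\le|\Phi(0)|+L(1+|y|)|y|\le C(1+|y|^{2})$ and similarly $|f|,|g_{ij}|\le C(1+|y|^{2})$, so $|J(t,x,v_{0})|\le C\mathbb{\hat{E}}_{t}[1+\sup_{s\in[t,T]}|X_{s}^{t,x,v_{0}}|^{2}]\le C(1+|x|^{2})$ by Theorem \ref{th-ex}. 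For the lower bound one uses the same pointwise estimates on $\Phi,f,g_{ij}$ to get $J(t,x,v)\ge -C(1+\mathbb{\hat{E}}_{t}[\sup_{s}|X_{s}^{t,x,v}|^{2}])\ge -C(1+|x|^{2})$ uniformly in $v$, hence $V(t,x)=\inf_{v}J(t,x,v)\ge -C(1+|x|^{2})$.

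The routine part is the Gronwall-type SDE estimates, which are already packaged in Theorem \ref{th-ex}. The one point requiring a little care — and the main obstacle — is controlling the $\sup_{s\in[t,T]}$ over time inside the difference estimate $\mathbb{\hat{E}}_{t}[|X_{s}^{t,x,v}-X_{s}^{t,x^{\prime},v}|^{2}]$: Theorem \ref{th-ex} as stated gives the difference bound only at a fixed terminal time $t+\delta$, so I would need to either cite the sup-version from \cite{P2019}/\cite{HJ1} or reprove it by a short Burkholder--Davis--Gundy and Gronwall argument; alternatively, since $\Phi$ appears only at the single time $T$ and the $ds$, $d\langle B^i,B^j\rangle$ integrals can be estimated by $\int_{t}^{T}\mathbb{\hat{E}}_{t}[\,\cdot\,]$ after moving $\mathbb{\hat{E}}_t$ inside via sublinearity, one can avoid the supremum in the difference term entirely and only needs the $\sup$-moment bound (the third estimate of Theorem \ref{th-ex} with $\delta=T-t$, combined with the second), which is already available. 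I would take this second route to keep the proof self-contained.
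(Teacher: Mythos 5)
Your proposal is correct and follows essentially the same route as the paper: fix the same control $v\in\mathcal{U}^{t}[t,T]$ on both sides, use (H1) and H\"older's inequality to bound $|J(t,x,v)-J(t,x',v)|$ by $\bigl(1+\sup_{s}\mathbb{\hat{E}}[|X_{s}^{t,x,v}|^{2}+|X_{s}^{t,x',v}|^{2}]^{1/2}\bigr)\sup_{s}\mathbb{\hat{E}}[|X_{s}^{t,x,v}-X_{s}^{t,x',v}|^{2}]^{1/2}$, invoke Theorem \ref{th-ex}, and pass to the infimum via $|\inf_{v}a_{v}-\inf_{v}b_{v}|\le\sup_{v}|a_{v}-b_{v}|$; the quadratic growth bound is likewise obtained from compactness of $U$ and the second moment estimate. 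Your final remark about avoiding $\mathbb{\hat{E}}[\sup_{s}|\cdot|^{2}]$ in the difference term by working with $\sup_{s}\mathbb{\hat{E}}[|\cdot|^{2}]$ at fixed times is exactly what the paper does.
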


\begin{proof}
For each given $v\in \mathcal{U}^{t}[t,T]$, similar to the proof of
(\ref{e3-11}), we can get%
\begin{align*}
&  |J(t,x,v)-J(t,x^{\prime},v)|\\
&  \leq C\left(  1+\left(  \sup_{s\in \lbrack t,T]}\mathbb{\hat{E}}\left[
|X_{s}^{t,x,v}|^{2}+|X_{s}^{t,x^{\prime},v}|^{2}\right]  \right)
^{1/2}\right)  \left(  \sup_{s\in \lbrack t,T]}\mathbb{\hat{E}}\left[
|X_{s}^{t,x,v}-X_{s}^{t,x^{\prime},v}|^{2}\right]  \right)  ^{1/2}.
\end{align*}
By Theorem \ref{th-ex} and the above inequality, we have%
\[
|J(t,x,v)-J(t,x^{\prime},v)|\leq C(1+|x|+|x^{\prime}|)|x-x^{\prime}|,
\]
where $C>0$ depends on $T$, $\bar{\sigma}^{2}$ and $L$. Thus, by (\ref{e3-8}),
we obtain%
\[
|V(t,x)-V(t,x^{\prime})|\leq \underset{v\in \mathcal{U}^{t}[t,T]}{\sup
}|J(t,x,v)-J(t,x^{\prime},v)|\leq C(1+|x|+|x^{\prime}|)|x-x^{\prime}|.
\]
Note that $U$ is compact, then we can deduce%
\[
|J(t,x,v)|\leq C\left(  1+\sup_{s\in \lbrack t,T]}\mathbb{\hat{E}}\left[
|X_{s}^{t,x,v}|^{2}\right]  \right)  ,
\]
where $C>0$ depends on $T$, $\bar{\sigma}^{2}$ and $L$. By Theorem
\ref{th-ex}, we obtain $|V(t,x)|\leq C(1+|x|^{2})$.
\end{proof}

The following theorem is the dynamic programming principle for control problem
(\ref{e3-3}).

\begin{theorem}
\label{th17}Let Assumptions (H1)-(H3) hold. Then, for each $t<T$, $\delta \leq
T-t$, $x\in \mathbb{R}^{n}$, we have%
\begin{align*}
V(t,x)  &  =\underset{u\in \mathcal{U}[t,t+\delta]}{ess\inf}\mathbb{\hat{E}%
}_{t}\left[  V(t+\delta,X_{t+\delta}^{t,x,u})+\int_{t}^{t+\delta}%
f(s,X_{s}^{t,x,u},u_{s})ds+\int_{t}^{t+\delta}g_{ij}(s,X_{s}^{t,x,u}%
,u_{s})d\langle B^{i},B^{j}\rangle_{s}\right] \\
&  =\underset{v\in \mathcal{U}^{t}[t,t+\delta]}{\inf}\mathbb{\hat{E}}\left[
V(t+\delta,X_{t+\delta}^{t,x,v})+\int_{t}^{t+\delta}f(s,X_{s}^{t,x,v}%
,v_{s})ds+\int_{t}^{t+\delta}g_{ij}(s,X_{s}^{t,x,v},v_{s})d\langle B^{i}%
,B^{j}\rangle_{s}\right]  .
\end{align*}

\end{theorem}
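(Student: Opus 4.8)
The plan is to mimic the classical DPP argument, but to exploit the fact—already established in Theorem~\ref{th15}—that the value function coincides with a deterministic infimum over the ``clean'' control class $\mathcal{U}^t[t,T]$, so that all the quantities appearing inside the conditional $G$-expectation are genuinely deterministic functions of the state. Denote by $\bar V(t,x)$ the right-hand side of the first displayed equality (the $ess\inf$ over $\mathcal{U}[t,t+\delta]$). First I would prove $V(t,x)\ge \bar V(t,x)$: fix an arbitrary $u\in\mathcal{U}[t,T]$; split it as $u=u^{(1)}\oplus u^{(2)}$ on $[t,t+\delta]$ and $[t+\delta,T]$. By flow uniqueness for the $G$-SDE (Theorem on existence/uniqueness) one has $X^{t,x,u}_s = X^{t+\delta,\,X^{t,x,u}_{t+\delta},\,u^{(2)}}_s$ for $s\ge t+\delta$; then by the tower property of conditional $G$-expectation, $J(t,x,u)=\mathbb{\hat E}_t\big[\int_t^{t+\delta} f\,ds+\int_t^{t+\delta} g_{ij}\,d\langle B^i,B^j\rangle_s + J(t+\delta, X^{t,x,u}_{t+\delta}, u^{(2)})\big]$. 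Since $J(t+\delta,y,u^{(2)})\ge V(t+\delta,y)$ for every deterministic $y$ (because $V$ is the deterministic infimum and $X^{t,x,u}_{t+\delta}$ is $L^2_G(\Omega_{t+\delta})$, one applies this inequality pointwise via the Lipschitz regularity of $V$ from Proposition~\ref{pr16} and a monotonicity/approximation argument for $\mathbb{\hat E}_t$), monotonicity of $\mathbb{\hat E}_t$ gives $J(t,x,u)\ge \mathbb{\hat E}_t\big[\int_t^{t+\delta}(\cdots) + V(t+\delta,X^{t,x,u}_{t+\delta})\big]\ge \bar V(t,x)$. Taking $ess\inf$ over $u$ yields $V(t,x)\ge\bar V(t,x)$.

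For the reverse inequality $V(t,x)\le\bar V(t,x)$, I would fix $u\in\mathcal{U}[t,t+\delta]$ and an arbitrary $\varepsilon>0$, and build a near-optimal concatenation. Using Lemma~\ref{le14}, approximate $u$ by $u^k\in\mathbb{U}[t,t+\delta]$, so that $u^k=\sum_{j=1}^{N_k} I_{A_j^k}v^{j,k}$ with $v^{j,k}\in\mathcal{U}^t[t,t+\delta]$ and $(A_j^k)$ a partition with $I_{A_j^k}\in L^2_G(\Omega_t)$. On each piece $v^{j,k}$ the terminal state $X^{t,x,v^{j,k}}_{t+\delta}$ lies in $L^2_G(\Omega^t_{t+\delta})$, and by the Lipschitz bound in Proposition~\ref{pr16} together with the definition $V(t+\delta,y)=\inf_{w\in\mathcal{U}^{t+\delta}[t+\delta,T]}J(t+\delta,y,w)$, one can select for each $j$ a control $w^{j,k}\in\mathcal{U}^{t+\delta}[t+\delta,T]$ (measurably in the value of the terminal state, after discretizing $X^{t,x,v^{j,k}}_{t+\delta}$ into finitely many cells as in Lemma~\ref{le12}) with $J(t+\delta, X^{t,x,v^{j,k}}_{t+\delta}, w^{j,k}) \le V(t+\delta, X^{t,x,v^{j,k}}_{t+\delta})+\varepsilon$. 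Concatenating $u^k$ on $[t,t+\delta]$ with $\sum_j I_{A_j^k} w^{j,k}$ on $[t+\delta,T]$ produces an admissible $\hat u\in\mathcal{U}[t,T]$, and the tower property plus $V(t,x)\le J(t,x,\hat u)$ gives $V(t,x)\le \mathbb{\hat E}_t\big[\int_t^{t+\delta}(\cdots\text{ with }u^k) + V(t+\delta,X^{t,x,u^k}_{t+\delta})\big]+\varepsilon$. Finally let $k\to\infty$, using the $L^1_G$-stability estimate (\ref{e3-11})-type bound together with the Lipschitz continuity of $V$ and the flow estimates of Theorem~\ref{th-ex} to pass to the limit, and then let $\varepsilon\to 0$, obtaining $V(t,x)\le \mathbb{\hat E}_t\big[\int_t^{t+\delta}(\cdots\text{ with }u) + V(t+\delta,X^{t,x,u}_{t+\delta})\big]$ for every $u\in\mathcal{U}[t,t+\delta]$; taking $ess\inf$ gives $V(t,x)\le\bar V(t,x)$.

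The second equality in the theorem—that the $ess\inf$ over $\mathcal{U}[t,t+\delta]$ equals the deterministic $\inf$ over $\mathcal{U}^t[t,t+\delta]$—would then be obtained by repeating verbatim the argument of Theorem~\ref{th15}: the map $v\mapsto \mathbb{\hat E}\big[V(t+\delta,X^{t,x,v}_{t+\delta})+\int_t^{t+\delta} f\,ds + \int_t^{t+\delta} g_{ij}\,d\langle B^i,B^j\rangle_s\big]$ is a constant for $v\in\mathcal{U}^t[t,t+\delta]$ since all processes live in $L^p_G(\Omega^t_\cdot)$, and Lemma~\ref{le14} together with the stability estimate (\ref{e3-11}) and the Lipschitz regularity of $V$ (Proposition~\ref{pr16}) lets us approximate a general $u\in\mathcal{U}[t,t+\delta]$ by controls in $\mathbb{U}[t,t+\delta]$, on each branch of which the cost-to-go is bounded below by the deterministic $\inf$; the same $(\cdot)^-$ expectation trick used in Theorem~\ref{th15} then upgrades this to a q.s.\ inequality.

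I expect the main obstacle to be the measurable selection step in the reverse inequality: choosing, for the $L^2_G$-valued random terminal state $X^{t,x,v^{j,k}}_{t+\delta}$, an $\varepsilon$-optimal continuation control that is both admissible (i.e.\ lies in $M^{2,t+\delta}_G$ with values in $U$) and depends on $\omega$ only through $\mathcal{F}_{t+\delta}$ in a way compatible with the implied-partition structure of Lemma~\ref{le12}. The clean way around this is to first discretize $X^{t,x,v^{j,k}}_{t+\delta}$ into finitely many constant values on cells built from level sets of the non-degenerate combinations $B^i+\lambda B^{i^\ast}$ (exactly as in the proof of Lemma~\ref{le12}), pick a deterministic $\varepsilon$-optimal control for each of the finitely many cell-values, and glue; the Lipschitz bound on $V$ from Proposition~\ref{pr16} controls the error introduced by the discretization. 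Everything else is a routine combination of the tower property of $\mathbb{\hat E}_t$, flow uniqueness for the $G$-SDE, and the $L^1_G$-continuity estimate already proved in (\ref{e3-11}).
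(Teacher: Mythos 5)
Your proposal is correct and follows essentially the same route as the paper's proof: both hinge on Theorem \ref{th15}'s deterministic representation, the implied-partition approximation of the terminal state via Lemma \ref{le12}, the gluing of near-optimal deterministic continuation controls, and the stability estimates of type (\ref{e3-11}) combined with Proposition \ref{pr16} to pass to the limit. The only difference is organizational — the paper first reduces the $ess\inf$ to the deterministic infimum over $\mathcal{U}^{t}[t,t+\delta]$ and then proves both inequalities for controls in $\mathcal{U}^{t}$, whereas you argue with general admissible controls and perform that reduction last — which does not change the substance.
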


\begin{proof}
Similar to the proof of Theorem \ref{th15}, we have%
\begin{align*}
&  \underset{u\in \mathcal{U}[t,t+\delta]}{ess\inf}\mathbb{\hat{E}}_{t}\left[
V(t+\delta,X_{t+\delta}^{t,x,u})+\int_{t}^{t+\delta}f(s,X_{s}^{t,x,u}%
,u_{s})ds+\int_{t}^{t+\delta}g_{ij}(s,X_{s}^{t,x,u},u_{s})d\langle B^{i}%
,B^{j}\rangle_{s}\right] \\
&  =\underset{v\in \mathcal{U}^{t}[t,t+\delta]}{\inf}\mathbb{\hat{E}}\left[
V(t+\delta,X_{t+\delta}^{t,x,v})+\int_{t}^{t+\delta}f(s,X_{s}^{t,x,v}%
,v_{s})ds+\int_{t}^{t+\delta}g_{ij}(s,X_{s}^{t,x,v},v_{s})d\langle B^{i}%
,B^{j}\rangle_{s}\right]  .
\end{align*}
For each fixed $v\in \mathcal{U}^{t}[t,T]$, we assert that%
\begin{equation}
V(t+\delta,X_{t+\delta}^{t,x,v})\leq \mathbb{\hat{E}}_{t+\delta}\left[
\Phi(X_{T}^{t,x,v})+\int_{t+\delta}^{T}f(s,X_{s}^{t,x,v},v_{s})ds+\int
_{t+\delta}^{T}g_{ij}(s,X_{s}^{t,x,v},v_{s})d\langle B^{i},B^{j}\rangle
_{s}\right]  . \label{e3-14}%
\end{equation}
Since%
\[%
\begin{array}
[c]{rl}%
X_{s}^{t,x,v}= & X_{t+\delta}^{t,x,v}+\int_{t+\delta}^{s}b(r,X_{r}%
^{t,x,v},v_{r})dr+\int_{t+\delta}^{s}h_{ij}(r,X_{r}^{t,x,v},v_{r})d\langle
B^{i},B^{j}\rangle_{r}\\
& +\int_{t+\delta}^{s}\sigma(r,X_{r}^{t,x,v},v_{r})dB_{r},\text{ }s\in \lbrack
t+\delta,T],
\end{array}
\]
we have $X_{s}^{t,x,v}=X_{s}^{t+\delta,X_{t+\delta}^{t,x,v},v}$ for
$s\in \lbrack t+\delta,T]$. Thus inequality (\ref{e3-14}) is equivalent to%
\[
V(t+\delta,X_{t+\delta}^{t,x,v})\leq J(t+\delta,X_{t+\delta}^{t,x,v},v).
\]
By Lemma \ref{le12}, there exists a sequence $\xi^{k}=\sum_{j=1}^{N_{k}}%
x_{j}^{k}I_{A_{j}^{k}}$, $k\geq1$, such that%
\begin{equation}
\lim_{k\rightarrow \infty}\mathbb{\hat{E}}\left[  |X_{t+\delta}^{t,x,v}-\xi
^{k}|^{2}\right]  =0, \label{e3-15}%
\end{equation}
where $x_{j}^{k}\in \mathbb{R}^{n}$, $I_{A_{j}^{k}}\in L_{G}^{2}(\Omega
_{t+\delta})$, $j\leq N_{k}$, $(A_{j}^{k})_{i=1}^{N_{k}}$ is a $\mathcal{B}%
(\Omega_{t+\delta})$-partition of $\Omega$. It is easy to verify that%
\[
X_{s}^{t+\delta,\xi^{k},v}=\sum_{j=1}^{N_{k}}X_{s}^{t+\delta,x_{j}^{k}%
,v}I_{A_{j}^{k}}\text{ for }s\in \lbrack t+\delta,T].
\]
Thus, by the definition of $V(t+\delta,x_{j}^{k})$, we obtain%
\begin{align*}
&  J(t+\delta,\xi^{k},v)\\
&  =\mathbb{\hat{E}}_{t+\delta}\left[  \Phi(X_{T}^{t+\delta,\xi^{k},v}%
)+\int_{t+\delta}^{T}f(s,X_{s}^{t+\delta,\xi^{k},v},v_{s})ds+\int_{t+\delta
}^{T}g_{ij}(s,X_{s}^{t+\delta,\xi^{k},v},v_{s})d\langle B^{i},B^{j}\rangle
_{s}\right] \\
&  =\sum_{j=1}^{N_{k}}I_{A_{j}^{k}}\mathbb{\hat{E}}_{t+\delta}\left[
\Phi(X_{T}^{t+\delta,x_{j}^{k},v})+\int_{t+\delta}^{T}f(s,X_{s}^{t+\delta
,x_{j}^{k},v},v_{s})ds+\int_{t+\delta}^{T}g_{ij}(s,X_{s}^{t+\delta,x_{j}%
^{k},v},v_{s})d\langle B^{i},B^{j}\rangle_{s}\right] \\
&  \geq \sum_{j=1}^{N_{k}}I_{A_{j}^{k}}V(t+\delta,x_{j}^{k}),
\end{align*}
which implies%
\begin{equation}
J(t+\delta,\xi^{k},v)\geq V(t+\delta,\xi^{k}). \label{e3-16}%
\end{equation}
Similar to the proof of (\ref{e3-11}), we can get%
\begin{equation}
\mathbb{\hat{E}}\left[  |J(t+\delta,\xi^{k},v)-J(t+\delta,X_{t+\delta}%
^{t,x,v},v)|\right]  \leq C\left(  1+(\mathbb{\hat{E}}\left[  |\xi^{k}%
|^{2}+|X_{t+\delta}^{t,x,v}|^{2}\right]  )^{1/2}\right)  (\mathbb{\hat{E}%
}\left[  |X_{t+\delta}^{t,x,v}-\xi^{k}|^{2}\right]  )^{1/2}, \label{e3-17}%
\end{equation}
where $C>0$ depends on $T$, $\bar{\sigma}^{2}$ and $L$. By Proposition
\ref{pr16}, we have%
\begin{equation}
\mathbb{\hat{E}}\left[  |V(t+\delta,\xi^{k})-V(t+\delta,X_{t+\delta}%
^{t,x,v})|\right]  \leq C\left(  1+(\mathbb{\hat{E}}\left[  |\xi^{k}%
|^{2}+|X_{t+\delta}^{t,x,v}|^{2}\right]  )^{1/2}\right)  (\mathbb{\hat{E}%
}\left[  |X_{t+\delta}^{t,x,v}-\xi^{k}|^{2}\right]  )^{1/2}, \label{e3-18}%
\end{equation}
where $C>0$ depends on $T$, $\bar{\sigma}^{2}$ and $L$. Thus, by
(\ref{e3-15}), (\ref{e3-17}) and (\ref{e3-18}), we obtain (\ref{e3-14}) by
taking $k\rightarrow \infty$ in (\ref{e3-16}). From (\ref{e3-14}), we can
easily deduce%
\[
J(t,x,v)\geq \mathbb{\hat{E}}\left[  V(t+\delta,X_{t+\delta}^{t,x,v})+\int
_{t}^{t+\delta}f(s,X_{s}^{t,x,v},v_{s})ds+\int_{t}^{t+\delta}g_{ij}%
(s,X_{s}^{t,x,v},v_{s})d\langle B^{i},B^{j}\rangle_{s}\right]  ,
\]
which implies%
\[
V(t,x)\geq \underset{v\in \mathcal{U}^{t}[t,t+\delta]}{\inf}\mathbb{\hat{E}%
}\left[  V(t+\delta,X_{t+\delta}^{t,x,v})+\int_{t}^{t+\delta}f(s,X_{s}%
^{t,x,v},v_{s})ds+\int_{t}^{t+\delta}g_{ij}(s,X_{s}^{t,x,v},v_{s})d\langle
B^{i},B^{j}\rangle_{s}\right]  .
\]

Now we prove the inequality in the opposite direction. For each given
$v\in \mathcal{U}^{t}[t,t+\delta]$, we only need to prove%
\begin{equation}
V(t,x)\leq \mathbb{\hat{E}}\left[  V(t+\delta,X_{t+\delta}^{t,x,v})+\int
_{t}^{t+\delta}f(s,X_{s}^{t,x,v},v_{s})ds+\int_{t}^{t+\delta}g_{ij}%
(s,X_{s}^{t,x,v},v_{s})d\langle B^{i},B^{j}\rangle_{s}\right]  . \label{e3-19}%
\end{equation}
Let $\xi^{k}$ be defined in (\ref{e3-15}). By Theorem \ref{th15}, for each
$x_{j}^{k}$, there exists a $v^{j,k}\in \mathcal{U}^{t}[t+\delta,T]$ such that%
\begin{equation}
|V(t+\delta,x_{j}^{k})-J(t+\delta,x_{j}^{k},v^{j,k})|\leq k^{-1}.
\label{e3-20}%
\end{equation}
Set $\tilde{v}_{s}^{k}=\sum_{j=1}^{N_{k}}I_{A_{j}^{k}}v_{s}^{j,k}%
I_{[t+\delta,T]}(s)$, it is easy to verify that%
\[
J(t+\delta,\xi^{k},\tilde{v}^{k})=\sum_{j=1}^{N_{k}}I_{A_{j}^{k}}%
J(t+\delta,x_{j}^{k},v^{j,k})\text{ and }V(t+\delta,\xi^{k})=\sum_{j=1}%
^{N_{k}}I_{A_{j}^{k}}V(t+\delta,x_{j}^{k}).
\]
Thus we obtain%
\begin{equation}
|V(t+\delta,\xi^{k})-J(t+\delta,\xi^{k},\tilde{v}^{k})|\leq k^{-1}
\label{e3-21}%
\end{equation}
by (\ref{e3-20}). Set $v_{s}^{k}=v_{s}I_{[t,t+\delta)}(s)+\tilde{v}_{s}%
^{k}I_{[t+\delta,T]}(s)$. Then%
\begin{equation}
V(t,x)\leq \mathbb{\hat{E}}\left[  J(t+\delta,X_{t+\delta}^{t,x,v},\tilde
{v}^{k})+\int_{t}^{t+\delta}f(s,X_{s}^{t,x,v},v_{s})ds+\int_{t}^{t+\delta
}g_{ij}(s,X_{s}^{t,x,v},v_{s})d\langle B^{i},B^{j}\rangle_{s}\right]  .
\label{e3-22}%
\end{equation}
By (\ref{e3-15}), (\ref{e3-17}), (\ref{e3-18}) and (\ref{e3-21}), we obtain
(\ref{e3-19}) by taking $k\rightarrow \infty$ in (\ref{e3-22}). Thus we obtain
the dynamic programming principle.
\end{proof}

By using the dynamic programming principle, we study the properties of
$V(\cdot,\cdot)$ in $t$.

\begin{proposition}
\label{pr18}Let Assumptions (H1)-(H3) hold. Then, for each $t<T$, $\delta \leq
T-t$ and $x\in \mathbb{R}^{n}$, we have%
\[
|V(t,x)-V(t+\delta,x)|\leq C(1+|x|^{2})\sqrt{\delta},
\]
where $C>0$ depends on $T$, $\bar{\sigma}^{2}$ and $L$.
\end{proposition}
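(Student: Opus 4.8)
The plan is to use the dynamic programming principle (Theorem~\ref{th17}) as the engine: we have, for any $v\in\mathcal{U}^{t}[t,t+\delta]$,
\[
V(t+\delta,x)-C(1+|x|^{2})\sqrt{\delta}\leq\text{the DPP right-hand side with }V(t+\delta,x)\text{ replaced by }V(t+\delta,X_{t+\delta}^{t,x,v}),
\]
so the comparison between $V(t,x)$ and $V(t+\delta,x)$ reduces to controlling three increments: first, the running-cost terms $\int_{t}^{t+\delta}f\,ds$ and $\int_{t}^{t+\delta}g_{ij}\,d\langle B^{i},B^{j}\rangle_{s}$; second, the discrepancy $V(t+\delta,X_{t+\delta}^{t,x,v})-V(t+\delta,x)$; and third, matching $V(t,x)$ to the DPP expression with a near-optimal choice of $v$.

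For the first piece, I would pick any fixed constant control $v\equiv v_{0}\in U$ (legitimate since $\mathcal{U}^{t}[t,t+\delta]$ contains constants), use the growth $|f(s,x,v)|+|g_{ij}(s,x,v)|\leq C(1+|x|^{2})$ implied by (H1) together with the moment estimate $\mathbb{\hat{E}}_{t}[|X_{s}^{t,x,v}|^{2}]\leq C(1+|x|^{2})$ from Theorem~\ref{th-ex}, and note $\langle B^{i},B^{j}\rangle$ has bounded density (the intervals $[t,t+\delta]$ contribute $O(\delta)$), so both integrals are bounded in $\mathbb{\hat{E}}$-norm by $C(1+|x|^{2})\delta\leq C(1+|x|^{2})\sqrt{\delta}$. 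For the second piece, I would apply the Lipschitz estimate of Proposition~\ref{pr16} followed by Cauchy--Schwarz and the third bound in Theorem~\ref{th-ex}, namely $\mathbb{\hat{E}}_{t}[\sup_{s\in[t,t+\delta]}|X_{s}^{t,x,v}-x|^{2}]\leq C(1+|x|^{2})\delta$, giving
\[
\mathbb{\hat{E}}\left[|V(t+\delta,X_{t+\delta}^{t,x,v})-V(t+\delta,x)|\right]\leq C(1+|x|)\left(\mathbb{\hat{E}}[|X_{t+\delta}^{t,x,v}-x|^{2}]\right)^{1/2}\leq C(1+|x|^{2})\sqrt{\delta}.
\]

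Putting these together with the DPP yields one inequality, say $V(t,x)\leq V(t+\delta,x)+C(1+|x|^{2})\sqrt{\delta}$, directly from the constant-control bound in Theorem~\ref{th17}. For the reverse inequality $V(t+\delta,x)\leq V(t,x)+C(1+|x|^{2})\sqrt{\delta}$, I would instead use the $ess\inf$/infimum characterisation: by Theorem~\ref{th17} there is, for each $\varepsilon>0$, a $v\in\mathcal{U}^{t}[t,t+\delta]$ whose DPP expression is within $\varepsilon$ of $V(t,x)$; on that expression, replace $V(t+\delta,X_{t+\delta}^{t,x,v})$ by $V(t+\delta,x)$ at the cost of the second-piece estimate above, and drop the (absolutely $O(\sqrt{\delta})$-bounded) running-cost terms, to get $V(t,x)\geq V(t+\delta,x)-C(1+|x|^{2})\sqrt{\delta}-\varepsilon$; let $\varepsilon\to0$. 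Combining both directions gives the claim with a $C$ depending only on $T$, $\bar\sigma^{2}$ and $L$.

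The main obstacle is bookkeeping rather than conceptual: one must be careful that all the constants genuinely depend only on $T$, $\bar\sigma^{2}$, $L$ and not on the particular admissible control (which is why fixing a constant control for the upper bound and exploiting the deterministic infimum representation \eqref{e3-8} for the lower bound is convenient), and that the $\langle B^{i},B^{j}\rangle_{s}$-integrals over a window of length $\delta$ are controlled — this uses that each $\langle B^{i},B^{j}\rangle$ is absolutely continuous with density bounded by $\bar\sigma^{2}$, so those terms are $O(\delta)$ uniformly, hence absorbed into $C(1+|x|^{2})\sqrt{\delta}$ since $\delta\leq\sqrt{T}\sqrt{\delta}$.
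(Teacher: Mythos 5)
Your proposal is correct and follows essentially the same route as the paper: apply the DPP of Theorem~\ref{th17}, bound the running-cost integrals by $C(1+|x|^{2})\delta$ via (H1) and Theorem~\ref{th-ex}, and control $\mathbb{\hat{E}}[|V(t+\delta,X_{t+\delta}^{t,x,v})-V(t+\delta,x)|]$ by $C(1+|x|^{2})\sqrt{\delta}$ via Proposition~\ref{pr16}, H\"older and the moment estimates, with all constants uniform in $v$. The paper simply observes that this uniformity lets one pass the estimate through the infimum in one step rather than arguing the two inequalities separately with a constant control and a near-optimal control, but the content is identical.
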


\begin{proof}
By Theorem \ref{th17}, we have%
\[
V(t,x)=\underset{v\in \mathcal{U}^{t}[t,t+\delta]}{\inf}\mathbb{\hat{E}}\left[
V(t+\delta,X_{t+\delta}^{t,x,v})+\int_{t}^{t+\delta}f(s,X_{s}^{t,x,v}%
,v_{s})ds+\int_{t}^{t+\delta}g_{ij}(s,X_{s}^{t,x,v},v_{s})d\langle B^{i}%
,B^{j}\rangle_{s}\right]  .
\]
By Proposition \ref{pr16}, Theorem \ref{th-ex} and H\"{o}lder's inequality, we
get%
\begin{align*}
&  \mathbb{\hat{E}}\left[  |V(t+\delta,X_{t+\delta}^{t,x,v})-V(t+\delta
,x)|\right] \\
&  \leq C(1+|x|+(\mathbb{\hat{E}}\left[  |X_{t+\delta}^{t,x,v}|^{2}\right]
)^{1/2})(\mathbb{\hat{E}}\left[  |X_{t+\delta}^{t,x,v}-x|^{2}\right]
)^{1/2}\\
&  \leq C(1+|x|^{2})\sqrt{\delta},
\end{align*}
where $C>0$ depends on $T$, $\bar{\sigma}^{2}$ and $L$. It follows from (H1)
and Theorem \ref{th-ex} that%
\begin{align*}
&  \mathbb{\hat{E}}\left[  \left \vert \int_{t}^{t+\delta}f(s,X_{s}%
^{t,x,v},v_{s})ds+\int_{t}^{t+\delta}g_{ij}(s,X_{s}^{t,x,v},v_{s})d\langle
B^{i},B^{j}\rangle_{s}\right \vert \right] \\
&  \leq C\int_{t}^{t+\delta}(1+\mathbb{\hat{E}}[|X_{s}^{t,x,v}|^{2}])ds\\
&  \leq C(1+|x|^{2})\delta,
\end{align*}
where $C>0$ depends on $T$, $\bar{\sigma}^{2}$ and $L$. Thus we obtain the
desired result.
\end{proof}

\section{HJB equation}

In this section, we show that the value function $V(\cdot,\cdot)$ satisfies
the following HJB equation:%
\begin{equation}
\left \{
\begin{array}
[c]{l}%
\partial_{t}V(t,x)+\underset{v\in U}{\inf}H(t,x,\partial_{x}V(t,x),\partial
_{xx}^{2}V(t,x),v)=0,\\
V(T,x)=\Phi(x),\text{ }x\in \mathbb{R}^{n},
\end{array}
\right.  \label{e4-1}%
\end{equation}
where $(t,x,p,A,v)\in \lbrack0,T]\times \mathbb{R}^{n}\times \mathbb{R}^{n}%
\times \mathbb{S}_{n}\times U$,%
\[%
\begin{array}
[c]{c}%
H(t,x,p,A,v)=G(F(t,x,p,A,v))+\langle p,b(t,x,v)\rangle+f(t,x,v),\\
F_{ij}(t,x,p,A,v)=(\sigma^{T}(t,x,v)A\sigma(t,x,v))_{ij}+2\langle
p,h_{ij}(t,x,v)\rangle+2g_{ij}(t,x,v).
\end{array}
\]

\begin{definition}
\label{de19}(\cite{CIP}) A function $V(\cdot,\cdot)\in C\left(  [0,T]\times
\mathbb{R}^{n}\right)  $ is called a viscosity subsolution (resp.
supersolution) to (\ref{e4-1}) if $V(T,x)\leq \Phi(x)$ (resp. $V(T,x)\geq
\Phi(x)$) for each $x\in \mathbb{R}^{n}$, and for each given $\left(
t,x\right)  \in \lbrack0,T)\times \mathbb{R}^{n}$, $\phi \in C_{Lip}%
^{1,2}([0,T]\times \mathbb{R}^{n})$ such that $\phi \left(  t,x\right)  =V(t,x)$
and $\phi \geq V$ (resp. $\phi \leq V$) on $[0,T]\times \mathbb{R}^{n}$, we have
\[
\partial_{t}\phi(t,x)+\underset{v\in U}{\inf}H(t,x,\partial_{x}\phi
(t,x),\partial_{xx}^{2}\phi(t,x),v)\geq0\text{ (resp. }\leq0\text{)}.
\]

A function $V(\cdot,\cdot)\in C\left(  [0,T]\times \mathbb{R}^{n}\right)  $ is
called a viscosity solution to (\ref{e4-1}) if it is both a viscosity
subsolution and a viscosity supersolution to (\ref{e4-1}).
\end{definition}

\begin{remark}
$C_{Lip}^{1,2}([0,T]\times \mathbb{R}^{n})$ denotes the set of real-valued
functions that are continuously differentiable up to the first order (resp.
second order) in $t$-variable (resp. $x$-variable) and whose derivatives are
Lipschitz functions.
\end{remark}

\begin{theorem}
\label{th20}Let Assumptions (H1)-(H3) hold. Then the value function
$V(\cdot,\cdot)$ is the unique viscosity solution to the HJB equation
(\ref{e4-1}).
\end{theorem}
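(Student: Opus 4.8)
The plan is to prove Theorem \ref{th20} in two parts: first that $V(\cdot,\cdot)$ is a viscosity solution of the HJB equation (\ref{e4-1}), and then that such a solution is unique. For the existence part, I would rely on the dynamic programming principle (Theorem \ref{th17}) together with the regularity of $V$ established in Propositions \ref{pr16} and \ref{pr18}: the latter two give that $V \in C([0,T]\times\mathbb{R}^n)$ with at most quadratic growth, which is needed even to talk about viscosity solutions. The terminal condition $V(T,x)=\Phi(x)$ is immediate from the definition of $J$ in (\ref{e3-2}) with $t=T$.

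To show $V$ is a viscosity subsolution (resp. supersolution), fix $(t,x)\in[0,T)\times\mathbb{R}^n$ and a test function $\phi\in C^{1,2}_{Lip}$ with $\phi(t,x)=V(t,x)$ and $\phi\geq V$ (resp. $\phi\leq V$) on $[0,T]\times\mathbb{R}^n$. Using the second (deterministic-infimum) form of the DPP in Theorem \ref{th17}, for any fixed $v\in\mathcal{U}^t[t,t+\delta]$ one has
\[
\phi(t,x)=V(t,x)\leq \mathbb{\hat{E}}\left[V(t+\delta,X^{t,x,v}_{t+\delta})+\int_t^{t+\delta}f(s,X^{t,x,v}_s,v_s)ds+\int_t^{t+\delta}g_{ij}(s,X^{t,x,v}_s,v_s)d\langle B^i,B^j\rangle_s\right]
\]
(for the supersolution direction one uses instead the near-optimal controls). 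Replacing $V(t+\delta,\cdot)$ by $\phi(t+\delta,\cdot)$ (legitimate because of the ordering), applying the $G$-It\^o formula of the $G$-calculus to $\phi(s,X^{t,x,v}_s)$ on $[t,t+\delta]$, and using the sublinearity of $\mathbb{\hat{E}}$ (which turns $\mathbb{\hat{E}}[\int F_{ij}d\langle B^i,B^j\rangle_s]$ into a $G$-term), I would divide by $\delta$, let $\delta\downarrow 0$, and use the continuity in $s$ of the coefficients (H2) and of $\partial_t\phi,\partial_x\phi,\partial^2_{xx}\phi$ to get
\[
\partial_t\phi(t,x)+G(F(t,x,\partial_x\phi,\partial^2_{xx}\phi,v))+\langle\partial_x\phi,b(t,x,v)\rangle+f(t,x,v)\geq 0
\]
for every constant control $v\in U$ (choosing $v_s\equiv v$); taking the infimum over $v\in U$ gives the subsolution inequality. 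The supersolution inequality is symmetric, using that one can pick controls that are $\varepsilon$-optimal for the DPP. The estimates from Theorem \ref{th-ex} (moment bounds and the $\delta^{p/2}$ increment bound) are what make the error terms in the $\delta\downarrow 0$ limit vanish.

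For uniqueness, I would invoke the standard comparison principle for viscosity solutions of second-order parabolic Hamilton-Jacobi-Bellman equations with the structural conditions at hand: $G$ is monotone and Lipschitz on $\mathbb{S}_d$, the coefficients $b,h_{ij},\sigma$ are Lipschitz in $x$ uniformly in $v$ and continuous in $s$ (H1)-(H2), and $f,g_{ij},\Phi$ satisfy the local-Lipschitz/quadratic-growth bounds in (H1). Since $H(t,x,p,A,v)$ is built from $G$ composed with $F$, which is affine in $A$ and $p$ with Lipschitz coefficients, $H$ is degenerate elliptic and satisfies the usual Crandall-Ishii-Lions continuity assumption in $(t,x)$; by a change of variables $\tilde V(t,x)=e^{\mu t}V(t,x)/(1+|x|^2)$ or by working in the class of functions of quadratic growth directly, one obtains comparison between a subsolution and a supersolution, hence uniqueness in $C([0,T]\times\mathbb{R}^n)$ within the quadratic-growth class. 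Combined with the existence part, this gives that $V$ is the unique viscosity solution.

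The main obstacle I anticipate is the existence (subsolution/supersolution) argument rather than uniqueness: one must carefully justify passing from the conditional $G$-expectation DPP to the pointwise PDE inequality, controlling the It\^o-type remainder terms uniformly as $\delta\downarrow0$ using only the a priori estimates of Theorem \ref{th-ex}, and in particular handling the nonlinear $G(\cdot)$ term correctly — the sublinearity of $\mathbb{\hat{E}}$ must be used in the right direction for each of the two inequalities, and the fact that $\mathbb{\hat E}$ is not linear means the usual "take expectations and divide by $\delta$" step needs the $G$-It\^o formula and the inequality $\mathbb{\hat{E}}[\xi+\eta]\leq\mathbb{\hat{E}}[\xi]+\mathbb{\hat{E}}[\eta]$ applied with care. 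Degeneracy of $G$ does not cause any extra trouble here, since viscosity solutions are designed precisely to accommodate degenerate ellipticity; the degeneracy was only an issue earlier (in constructing enough admissible controls), and that has already been resolved by Lemma \ref{le14} and Theorem \ref{th15}.
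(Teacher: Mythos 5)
Your overall strategy coincides with the paper's: continuity of $V$ from Propositions \ref{pr16} and \ref{pr18}, the deterministic form of the DPP from Theorem \ref{th17}, It\^{o}'s formula applied to $\phi(s,X_{s}^{t,x,u})$, freezing the state at $x$ via the estimates of Theorem \ref{th-ex}, and citing a standard comparison theorem for uniqueness (the paper invokes Theorem 3.5 of Barles--Buckdahn--Pardoux). For the subsolution half, your shortcut of testing only constant controls $v_{s}\equiv v$ is legitimate and even slightly simpler than the paper's route: the DPP inequality $\phi(t,x)\leq\mathbb{\hat{E}}[\cdots]$ holds for each individual control, and for a deterministic integrand one has $\mathbb{\hat{E}}[\int_{t}^{t+\delta}\eta_{ij}(s)d\langle B^{i},B^{j}\rangle_{s}]=2\int_{t}^{t+\delta}G((\eta_{ij}(s)))ds$, so one gets $\partial_{t}\phi(t,x)+H(t,x,\partial_{x}\phi,\partial_{xx}^{2}\phi,v)\geq0$ for each fixed $v\in U$ and then takes the infimum. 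The paper instead proves the two-sided identity $\inf_{u\in\mathcal{U}^{t}[t,t+\delta]}\mathbb{\hat{E}}[\int\Lambda_{1}ds+\int\Lambda_{2}^{ij}d\langle B^{i},B^{j}\rangle_{s}]=\int_{t}^{t+\delta}\Lambda(s,x)ds$, using a measurable selection argument for the direction your shortcut avoids.

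The genuine gap is in the supersolution half, which is not ``symmetric'' in the way you assert. There the DPP yields $0\geq\inf_{u\in\mathcal{U}^{t}[t,t+\delta]}\mathbb{\hat{E}}[\int_{t}^{t+\delta}\Lambda_{1}(s,x,u_{s})ds+\int_{t}^{t+\delta}\Lambda_{2}^{ij}(s,x,u_{s})d\langle B^{i},B^{j}\rangle_{s}]-C\delta^{3/2}$, and the $\varepsilon$-optimal control realizing this infimum is a genuinely adapted, random, $U$-valued process, not a constant; so you must bound the infimum over \emph{all} admissible controls from below by $\int_{t}^{t+\delta}\inf_{v\in U}\{\Lambda_{1}(s,x,v)+2G((\Lambda_{2}^{ij}(s,x,v)))\}ds$. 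Sublinearity alone does not give this in the useful direction: pointwise one has $\int\Lambda_{2}^{ij}(s,x,u_{s})d\langle B^{i},B^{j}\rangle_{s}-2\int G((\Lambda_{2}^{ij}(s,x,u_{s})))ds\leq0$ q.s., so the naive estimate loses a term of order $\delta$, which survives after dividing by $\delta$ and destroys the conclusion. The missing ingredient is the identity $\mathbb{\hat{E}}[\int\eta_{ij}d\langle B^{i},B^{j}\rangle_{s}-2\int G((\eta_{ij}))ds]=0$ for \emph{adapted} integrands (Proposition 4.1.4 in \cite{P2019}), which is precisely what the paper invokes at this point. You correctly flag that ``sublinearity must be used in the right direction,'' but without this identity the supersolution inequality does not follow. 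The uniqueness part of your sketch matches the paper's treatment, which simply cites the comparison results of \cite{BBP, BL, HJX}.
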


\begin{proof}
By Propositions \ref{pr16} and \ref{pr18}, we have $V(\cdot,\cdot)\in C\left(
[0,T]\times \mathbb{R}^{n}\right)  $. Now, we prove that $V(\cdot,\cdot)$ is a
viscosity subsolution to (\ref{e4-1}).

For each fixed $\left(  t,x\right)  \in \lbrack0,T)\times \mathbb{R}^{n}$,
$\phi \in C_{Lip}^{1,2}([0,T]\times \mathbb{R}^{n})$ such that $\phi \left(
t,x\right)  =V(t,x)$ and $\phi \geq V$, by Theorem \ref{th17}, we deduce that,
for $\delta \leq T-t$,%
\[
\phi \left(  t,x\right)  \leq \underset{u\in \mathcal{U}^{t}[t,t+\delta]}{\inf
}\mathbb{\hat{E}}\left[  \phi(t+\delta,X_{t+\delta}^{t,x,u})+\int
_{t}^{t+\delta}f(s,X_{s}^{t,x,u},u_{s})ds+\int_{t}^{t+\delta}g_{ij}%
(s,X_{s}^{t,x,u},u_{s})d\langle B^{i},B^{j}\rangle_{s}\right]  .
\]
Applying It\^{o}'s formula to $\phi(s,X_{s}^{t,x,u})$ on $[t,t+\delta]$, we
get%
\begin{align*}
&  \mathbb{\hat{E}}\left[  \phi(t+\delta,X_{t+\delta}^{t,x,u})-\phi \left(
t,x\right)  +\int_{t}^{t+\delta}f(s,X_{s}^{t,x,u},u_{s})ds+\int_{t}^{t+\delta
}g_{ij}(s,X_{s}^{t,x,u},u_{s})d\langle B^{i},B^{j}\rangle_{s}\right] \\
&  =\mathbb{\hat{E}}\left[  \int_{t}^{t+\delta}\Lambda_{1}(s,X_{s}%
^{t,x,u},u_{s})ds+\int_{t}^{t+\delta}\Lambda_{2}^{ij}(s,X_{s}^{t,x,u}%
,u_{s})d\langle B^{i},B^{j}\rangle_{s}\right]  ,
\end{align*}
where%
\[%
\begin{array}
[c]{rl}%
\Lambda_{1}(s,x,v)= & \partial_{t}\phi(s,x)+\langle b(s,x,v),\partial_{x}%
\phi(s,x)\rangle+f(s,x,v),\\
\Lambda_{2}^{ij}(s,x,v)= & \frac{1}{2}F_{ij}(s,x,\partial_{x}\phi
(s,x),\partial_{xx}^{2}\phi(s,x),v).
\end{array}
\]
Thus we obtain%
\begin{equation}
\underset{u\in \mathcal{U}^{t}[t,t+\delta]}{\inf}\mathbb{\hat{E}}\left[
\int_{t}^{t+\delta}\Lambda_{1}(s,X_{s}^{t,x,u},u_{s})ds+\int_{t}^{t+\delta
}\Lambda_{2}^{ij}(s,X_{s}^{t,x,u},u_{s})d\langle B^{i},B^{j}\rangle
_{s}\right]  \geq0. \label{e4-2}%
\end{equation}

Since $\phi \in C_{Lip}^{1,2}([0,T]\times \mathbb{R}^{n})$, we have%
\[%
\begin{array}
[c]{l}%
|\partial_{t}\phi(s,X_{s}^{t,x,u})-\partial_{t}\phi(s,x)|+|\partial_{x}%
\phi(s,X_{s}^{t,x,u})-\partial_{x}\phi(s,x)|\\
+|\partial_{xx}^{2}\phi(s,X_{s}^{t,x,u})-\partial_{xx}^{2}\phi(s,x)|\leq
C|X_{s}^{t,x,u}-x|,
\end{array}
\]
where $C>0$ depends on $\phi$. Then, by (H1), we get%
\begin{align*}
&  |\Lambda_{1}(s,X_{s}^{t,x,u},u_{s})-\Lambda_{1}(s,x,u_{s})|+|\Lambda
_{2}^{ij}(s,X_{s}^{t,x,u},u_{s})-\Lambda_{2}^{ij}(s,x,u_{s})|\\
&  \leq C(1+|x|^{2}+|X_{s}^{t,x,u}|^{2})|X_{s}^{t,x,u}-x|,
\end{align*}
where $C>0$ depends on $L$ and $\phi$. By Theorem \ref{th-ex} and H\"{o}lder's
inequality, we obtain%
\begin{align*}
&  \mathbb{\hat{E}}\left[  \int_{t}^{t+\delta}(|\Lambda_{1}(s,X_{s}%
^{t,x,u},u_{s})-\Lambda_{1}(s,x,u_{s})|+|\Lambda_{2}^{ij}(s,X_{s}%
^{t,x,u},u_{s})-\Lambda_{2}^{ij}(s,x,u_{s})|)ds\right] \\
&  \leq C\int_{t}^{t+\delta}(1+|x|^{2}+(\mathbb{\hat{E}}[|X_{s}^{t,x,u}%
|^{4}])^{1/2})(\mathbb{\hat{E}}[|X_{s}^{t,x,u}-x|^{2}])^{1/2}ds\\
&  \leq C(1+|x|^{3})\delta^{3/2},
\end{align*}
where $C>0$ depends on $T$, $\bar{\sigma}^{2}$, $L$ and $\phi$. Thus we have%
\begin{equation}
\underset{u\in \mathcal{U}^{t}[t,t+\delta]}{\inf}\mathbb{\hat{E}}\left[
\int_{t}^{t+\delta}\Lambda_{1}(s,x,u_{s})ds+\int_{t}^{t+\delta}\Lambda
_{2}^{ij}(s,x,u_{s})d\langle B^{i},B^{j}\rangle_{s}\right]  \geq
-C(1+|x|^{3})\delta^{3/2}. \label{e4-3}%
\end{equation}

Set
\begin{equation}
\Lambda(s,x)=\inf_{v\in U}\{ \Lambda_{1}(s,x,v)+2G((\Lambda_{2}^{ij}%
(s,x,v))_{i,j=1}^{d})\}. \label{e4-4}%
\end{equation}
Then, by Proposition 4.1.4 in \cite{P2019}, we get%
\begin{align*}
&  \underset{u\in \mathcal{U}^{t}[t,t+\delta]}{\inf}\mathbb{\hat{E}}\left[
\int_{t}^{t+\delta}\Lambda_{1}(s,x,u_{s})ds+\int_{t}^{t+\delta}\Lambda
_{2}^{ij}(s,x,u_{s})d\langle B^{i},B^{j}\rangle_{s}\right] \\
&  \geq \int_{t}^{t+\delta}\Lambda(s,x)ds+\underset{u\in \mathcal{U}%
^{t}[t,t+\delta]}{\inf}\mathbb{\hat{E}}\left[  \int_{t}^{t+\delta}\Lambda
_{2}^{ij}(s,x,u_{s})d\langle B^{i},B^{j}\rangle_{s}-2\int_{t}^{t+\delta
}G((\Lambda_{2}^{ij}(s,x,u_{s}))_{i,j=1}^{d})ds\right] \\
&  =\int_{t}^{t+\delta}\Lambda(s,x)ds.
\end{align*}
By measurable selection theorem, there exists a deterministic control
$u^{\ast}\in \mathcal{U}^{t}[t,t+\delta]$ such that%
\[
\int_{t}^{t+\delta}\Lambda(s,x)ds=\int_{t}^{t+\delta}[\Lambda_{1}%
(s,x,u_{s}^{\ast})+2G((\Lambda_{2}^{ij}(s,x,u_{s}^{\ast}))_{i,j=1}^{d})]ds,
\]
which implies%
\begin{align*}
&  \underset{u\in \mathcal{U}^{t}[t,t+\delta]}{\inf}\mathbb{\hat{E}}\left[
\int_{t}^{t+\delta}\Lambda_{1}(s,x,u_{s})ds+\int_{t}^{t+\delta}\Lambda
_{2}^{ij}(s,x,u_{s})d\langle B^{i},B^{j}\rangle_{s}\right] \\
&  \leq \int_{t}^{t+\delta}\Lambda(s,x)ds+\mathbb{\hat{E}}\left[  \int
_{t}^{t+\delta}\Lambda_{2}^{ij}(s,x,u_{s}^{\ast})d\langle B^{i},B^{j}%
\rangle_{s}-2\int_{t}^{t+\delta}G((\Lambda_{2}^{ij}(s,x,u_{s}^{\ast}%
))_{i,j=1}^{d})ds\right] \\
&  =\int_{t}^{t+\delta}\Lambda(s,x)ds.
\end{align*}
Thus, by (\ref{e4-3}), we obtain%
\begin{equation}
\int_{t}^{t+\delta}\Lambda(s,x)ds\geq-C(1+|x|^{3})\delta^{3/2}, \label{e4-5}%
\end{equation}
where $C>0$ depends on $T$, $\bar{\sigma}^{2}$, $L$ and $\phi$. It is easy to
check that $\Lambda(s,x)$ defined in (\ref{e4-4}) is continuous in $s$. Thus,
by (\ref{e4-5}), we get%
\[
\Lambda(t,x)=\lim_{\delta \downarrow0}\frac{1}{\delta}\int_{t}^{t+\delta
}\Lambda(s,x)ds\geq0,
\]
which implies that $V(\cdot,\cdot)$ is a viscosity subsolution to
(\ref{e4-1}). Similarly, we can show that $V(\cdot,\cdot)$ is a viscosity
supersolution to (\ref{e4-1}). Thus $V(\cdot,\cdot)$ is a viscosity solution
to (\ref{e4-1}). The uniqueness is due to Theorem 3.5 in \cite{BBP} (see also
\cite{BL, HJX}).
\end{proof}

\  \   \


\begin{thebibliography}{99}                                                                                               %


\bibitem {BBP}G. Barles, R. Buckdahn, E. Pardoux, Backward stochastic
differential equations and integral-partial differential equations,
Stochastics, 60 (1997), 57-83.

\bibitem {BL}R. Buckdahn, J. Li, \emph{Stochastic differential games and
viscosity solutions for Hamilton--Jacobi--Bellman--Isaacs equations}, SIAM J.
Control Optim., 47 (2008), 444-475.

\bibitem {CIP}M.G. Crandall, H. Ishii, P.L. Lions, \emph{User's guide to
viscosity solutions of second order partial differential equations, }Bull.
Amer. Math. Soc., 27 (1992), 1-67.

\bibitem {DHP11}L. Denis, M. Hu, S. Peng, \emph{Function spaces and capacity
related to a sublinear expectation: application to $G$-Brownian motion paths,}
Potential Anal., 34 (2011), 139-161.

\bibitem {DM}L. Denis, C. Martini, \emph{A theoretical framework for the
pricing of contingent claims in the presence of model uncertainty}, Ann. Appl.
Probab., 16 (2006), 827-852.

\bibitem {DK}L. Denis, K. Kervarec, Optimal investment under model uncertainty
in non-dominated models, SIAM J. Control Optim., 51 (2013), 1803-1822.

\bibitem {EJ-1}L. Epstein, S. Ji, \emph{Ambiguous volatility, possibility and
utility in continuous time,} J. Math. Econom., 50 (2014), 269-282.

\bibitem {EJ-2}L. Epstein, S. Ji, \emph{Ambiguous volatility and asset pricing
in continuous time}, Rev. Financ. Stud., 26 (2013), 1740-1786.

\bibitem {FPW}J. P. Fouque, C. Pun, H. Wong, Portfolio optimization with
ambiguous correlation and stochastic volatilities, SIAM J. Control Optim., 54
(2016), 2309-2338.

\bibitem {HW}B. Han, H. Wong, Optimal investment and consumption problems
under correlation ambiguity, IMA J. Manag. Math., 31 (2020), 69-89.

\bibitem {HJ1}M. Hu, S. Ji, \emph{Dynamic programming principle for stochastic
recursive optimal control problem driven by a G-Brownian motion}, Stochastic
Process. Appl., 127 (2017), 107-134.

\bibitem {HJPS1}M. Hu, S. Ji, S. Peng, Y. Song, \emph{Backward stochastic
differential equations driven by G-Brownian motion}, Stochastic Process.
Appl., 124 (2014), 759-784.

\bibitem {HJPS}M. Hu, S. Ji, S. Peng, Y. Song, \emph{Comparison theorem,
Feynman-Kac formula and Girsanov transformation for BSDEs driven by G-Brownian
motion}, Stochastic Process. Appl., 124 (2014), 1170-1195.

\bibitem {HJX}M. Hu, S. Ji, X. Xue, The existence and uniqueness of viscosity
solution to a kind of Hamilton-Jacobi-Bellman equation, SIAM J. Control
Optim., 57 (2019), 3911-3938.

\bibitem {HP09}M. Hu, S. Peng, \emph{On representation theorem of
G-expectations and paths of $G$-Brownian motion}, Acta Math. Appl. Sin. Engl.
Ser., 25 (2009), 539-546.

\bibitem {HWZ}M. Hu, F. Wang, G. Zheng, Quasi-continuous random variables and
processes under the \emph{$G$-expectation framework, }Stochastic Process.
Appl., 126 (2016), 2367-2387.

\bibitem {LL}X. Li, X. Li, $G$-capacity under degenerate case and its
application, arXiv:2109.07860v2, (2022).

\bibitem {MPZ}A. Matoussi, D. Possamai, C. Zhou, Robust Utility maximization
in non-dominated models with 2BSDEs, Math. Finance, 25 (2015), 258-287.

\bibitem {MY}J. Ma, J. Yong, Forward-Backward Stochastic Differential
Equations and Their Applications, Lect. Notes Math., Springer (1999).

\bibitem {P07a}S. Peng, \emph{$G$-expectation, $G$-Brownian Motion and Related
Stochastic Calculus of It\^{o} type}, Stochastic analysis and applications,
Abel Symp., Vol. 2, Springer, Berlin, 2007, 541-567.

\bibitem {P08a}S. Peng, \emph{Multi-dimensional $G$-Brownian motion and
related stochastic calculus under $G$-expectation}, Stochastic Process. Appl.,
118 (2008), 2223-2253.

\bibitem {P2019}S. Peng, Nonlinear Expectations and Stochastic Calculus under
Uncertainty, Springer (2019).

\bibitem {PZ}T. Pham, J. Zhang, \emph{Two Person Zero-sum Game in Weak
Formulation and Path Dependent Bellman-Isaacs Equation}, SIAM J. Control
Optim., 52 (2014), 2090-2121.

\bibitem {Pu}C. Pun, $G$-expected utility maximization with ambiguous
equicorrelation, Quant. Finance, 21 (2021), 403-419.

\bibitem {STZ11}H. M. Soner, N. Touzi, J. Zhang, \emph{Wellposedness of Second
Order Backward SDEs,} Probab. Theory Related Fields, 153 (2012), 149-190.

\bibitem {J.Yong}{\normalsize J. Yong, X. Y. Zhou, \emph{Stochastic controls:
Hamiltonian systems and HJB equations}, Springer (1999). }
\end{thebibliography}
\end{document}